\numberwithin{equation}{section}
\def\RR{\mathbb R}
\def\CC{\mathbb C}
\def\NN{\mathbb N}
\def\ZZ{\mathbb Z}
\def\dim{\operatorname{dim}}
\def\dens{\operatorname{dens}}
\def\diam{\operatorname{diam}}
\def\area{\operatorname{area}}
\def\dist{\operatorname{dist}}
\def\length{\operatorname{length}}
\def\esssup{\operatorname*{ess\,sup}}
\newtheorem{lemma}{Lemma}[section]
\newtheorem{theorem}{Theorem}
\newtheorem*{theorema}{Theorem A}
\newtheorem*{theoremb}{Theorem B}
\newtheorem*{theoremc}{Theorem C}
\newtheorem{proposition}{Proposition}[section]
\theoremstyle{definition}
\theoremstyle{remark}
\newtheorem*{rem}{Remark}
\newtheorem{remark}{Remark}
\title[Karpi\'nska's paradox]{Karpi\'nska's paradox in dimension three}
\subjclass{37F35 (primary), 30C65, 30D05, 37F10 (secondary)}
\author{Walter Bergweiler}
\thanks{Supported by the G.I.F.,
the German--Israeli Foundation for Scientific Research and
Development, Grant G-809-234.6/2003,
the EU Research Training Network CODY,
and the ESF Research Networking Programme HCAA}
\address{Mathematisches Seminar,
Christian--Albrechts--Universit\"at zu Kiel,
Lude\-wig--Meyn--Str.~4,
D--24098 Kiel,
Germany}
\email{bergweiler@math.uni-kiel.de}
\begin{document}
\begin{abstract}
For $0<\lambda<1/e$ the Julia set of $\lambda e^z$ is an uncountable
union of pairwise disjoint simple curves tending to infinity
[Devaney and Krych 1984], the Hausdorff dimension of this set is two
[McMullen 1987], but the set of curves without endpoints has Hausdorff
dimension one [Karpi\'nska 1999].
We show that these results have three-dimensional analogues
when the exponential function is replaced by a
quasiregular self-map of $\RR^3$ introduced by Zorich.
\end{abstract}
\maketitle
\section{Introduction and main result}\label{intro}

Zorich~\cite{Zorich67} has given an example of a quasiregular map
$F:\RR^3\to\RR^3\backslash \{0\}$ which in many ways can be
considered as a three-dimensional analogue of the exponential map.
In fact, the construction is quite flexible and gives a whole
class of such maps. It is the purpose of this paper to show that
certain results on the dynamics of entire functions of the form
$E_\lambda(z)=\lambda e^z$
have counterparts in the context of Zorich maps.

We first describe the results on the dynamics of the functions
$E_\lambda$ that we are concerned with. Then we briefly introduce
quasiregular maps and continue with the definition of Zorich
maps, before we finally state our results on the dynamics of such
maps.

The \emph{Julia set} $J(f)$ of an entire function~$f$
is the set of all points in~$\CC$
where the iterates $f^k$ 
of~$f$ 
do not form a normal family.
For an attracting fixed point~$\xi$ of~$f$ we call
$ A(\xi):=\left\{z:\lim_{k\to \infty}f^k(z)=\xi \right\} $
the \emph{attracting basin} of~$\xi$. It is a standard
result of complex dynamics that $\partial A(\xi)=J(f)$; 
see, e.g.,~\cite[Corollary 4.12]{Mil06}.
For an introduction to complex dynamics we refer to~\cite{Bea91,Mil06,Ste93}
for rational and to~\cite{Bergweiler93,Morosawa00} for entire functions.

For $0<\lambda<1/e$ the function $E_\lambda$ has an
attracting fixed point $\xi\in \RR$. Devaney and
Krych~\cite[p.~50]{Devaney84} proved
that $J(E_\lambda)=\CC\backslash A(\xi)$ and that $J(E_\lambda)$
is a ``Cantor set of curves'' for such~$\lambda$. To put this in a
precise form we say that a subset $H$ of $\CC$ (or $\RR^n$) is a
\emph{(Devaney) hair} if there exists a homeomorphism $\gamma:[0,\infty)\to
H$ such that $\gamma(t)\to\infty$ as $t\to\infty$. We call
$\gamma(0)$ the \emph{endpoint} of the hair~$H$. With this
terminology we obtain the following result from the work of
Devaney and Krych.

\begin{theorema} If $0<\lambda<1/e$, then $J(E_\lambda)$ is an
uncountable union of pairwise disjoint  
Devaney
hairs.
\end{theorema}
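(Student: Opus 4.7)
The plan is to set up symbolic dynamics via horizontal strips adapted to the $2\pi i$-periodicity of $E_\lambda$. Partition $\CC$ into the strips $S_n = \{z \in \CC : (2n-1)\pi \le \im z < (2n+1)\pi\}$ for $n \in \ZZ$. Each restriction $E_\lambda|_{S_n}$ is a conformal bijection onto $\CC \setminus (-\infty,0]$, with univalent inverse $L_n(w) = \log w - \log \lambda + 2\pi i n$ (principal branch of $\log$). Any point $z \in J(E_\lambda) = \CC \setminus A(\xi)$ thus has a well-defined \emph{itinerary} $\underline{s}(z) = (s_0, s_1, s_2, \ldots)$ determined by $E_\lambda^k(z) \in S_{s_k}$ for $k \ge 0$. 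I would prove Theorem A by showing that, for a suitable class of admissible itineraries, the fibres of $\underline{s}$ are precisely the Devaney hairs making up $J(E_\lambda)$.

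The key estimate is that $|L_n'(w)| = 1/|w|$, so every $L_n$ contracts strongly on any right half-plane $\{\re w \ge R\}$ with $R$ large. Since $0 < \lambda < 1/e$ places $\xi$ in $(0,1)$ and $A(\xi)$ fills out a neighbourhood of the negative real axis, there is $R_0$ such that $\{\re z \ge R_0\}$ is disjoint from $A(\xi)$ and $L_n$ maps $\{\re w \ge R_0\}$ into itself whenever $|n|$ is not too large. One calls $\underline{s}$ \emph{admissible} if iterated application of $L_{s_0} \circ L_{s_1} \circ \cdots \circ L_{s_{k-1}}$ to points of sufficiently large real part remains in this contraction regime for every $k$; this holds for every bounded sequence, and more generally whenever $\log|s_k|$ grows slowly with $k$.

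For each admissible itinerary $\underline{s}$ I would produce the hair $H_{\underline{s}}$ together with an explicit parametrization. For $t$ above an $\underline{s}$-dependent threshold, set
\[
\gamma_{\underline{s}}(t) = \lim_{k \to \infty} L_{s_0} \circ L_{s_1} \circ \cdots \circ L_{s_{k-1}}(t);
\]
iteration of the contraction bound gives uniform convergence on compact subsets, producing a continuous injection along which $\re \gamma_{\underline{s}}(t) \to \infty$ as $t \to \infty$ and $E_\lambda^k(\gamma_{\underline{s}}(t)) \in S_{s_k}$ for all $k$. Distinct admissible itineraries label disjoint curves, because some iterate separates them into different strips, and every $z \in \CC \setminus A(\xi)$ lies on one such curve, since otherwise its forward orbit would be trapped and absorbed into $A(\xi)$. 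Uncountability is then immediate, as already $\{0,1\}^{\NN}$ yields uncountably many admissible itineraries.

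The principal obstacle is the endpoint analysis: one must decrease $t$ as far as possible and show that $\gamma_{\underline{s}}$ extends continuously to a well-defined endpoint $\gamma_{\underline{s}}(0) \in J(E_\lambda)$ rather than oscillating or accumulating on a nontrivial continuum, and that after an order-preserving reparametrization the resulting map is a homeomorphism of $[0,\infty)$ onto $H_{\underline{s}}$. This will require combining the precise asymptotics of the logarithmic inverses with the geometry of $A(\xi)$ near the real axis, so that the curve descending toward smaller $t$ is forced to terminate at a single point of $J(E_\lambda)$ rather than re-enter the attracting basin.
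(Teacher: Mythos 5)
Your overall framework (inverse branches $L_n$ on strips, symbolic itineraries, contraction on a right half-plane) is the right skeleton and is indeed the one used by Devaney--Krych, Schleicher--Zimmer, and the paper in its $\RR^3$ analogue. But the central formula you write down for the hair parametrization is wrong, and the error is not a technicality --- it collapses the construction.

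You define
\[
\gamma_{\underline{s}}(t) = \lim_{k\to\infty} L_{s_0}\circ L_{s_1}\circ\cdots\circ L_{s_{k-1}}(t),
\]
with $t$ fixed and $k\to\infty$. The very contraction estimate you invoke ($|L_n'(w)|=1/|w|$, hence a contraction on any region where $|w|>1$) shows that for $t,t'$ both above your threshold, the orbits under $L_{s_{k-1}},L_{s_{k-2}},\ldots$ are asymptotic: the distance between $L_{s_0}\circ\cdots\circ L_{s_{k-1}}(t)$ and $L_{s_0}\circ\cdots\circ L_{s_{k-1}}(t')$ tends to $0$ as $k\to\infty$. So the limit exists but is \emph{independent of $t$}; your formula defines a single point (essentially the endpoint with itinerary $\underline{s}$), not a curve. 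In particular the claim that $\re\gamma_{\underline{s}}(t)\to\infty$ as $t\to\infty$ cannot be true: the limit has a fixed real part determined by $\underline{s}$ alone. The way to get a genuine one-parameter family is to make the base point grow tower-exponentially with $k$ so that it survives the $k$ logarithms. This is exactly what the paper does in its Lemma~\ref{lemma1}: with $E(t)=e^t-1$ one sets
\[
g_k(t) = (L_0\circ L_1\circ\cdots\circ L_k)\bigl(0,0,E^{k+1}(t)+M\bigr),
\]
(in your 2D notation, $L_{s_0}\circ\cdots\circ L_{s_k}(E^{k+1}(t)+C)$ for a suitable constant $C$), and then proves locally uniform convergence on $(t_{\underline{s}},\infty)$. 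The $E^{k+1}(t)$ inside the composition is the essential ingredient you are missing; without it there is no curve to speak of.

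Two further gaps, one of which you partly acknowledge. First, the assertion that every $z\in\CC\setminus A(\xi)$ lies on one of the constructed curves is not justified by saying ``otherwise its forward orbit would be trapped and absorbed into $A(\xi)$.'' Every such $z$ has an itinerary, but one must separately show (a) the itinerary is admissible and (b) $z$ equals $\gamma_{\underline{s}(z)}(t)$ for some $t$. The paper handles (b) in Lemma~\ref{lemma4}, defining $u_k$ by $x_{k,3}=E^k(u_k)$, showing $(u_k)$ is decreasing and bounded below by $t_{\underline{s}}$ (Lemma~\ref{lemma7}), and then proving $x=\lim_k g_k(u_k)$. Second, you correctly flag the endpoint extension and injectivity as the hard part; in the paper this takes Lemmas~\ref{lemma2a}, \ref{lemma2} and \ref{lemma3}, and the key to injectivity is a quantitative separation argument (once two orbits' third coordinates differ by a fixed amount $H$, they stay separated), not just the observation that the itineraries coincide.
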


Actually the results of Devaney and Krych are much more precise by
giving additional information e.g.\ on the location of the hairs
and on the dynamics of $E_\lambda$ on them, but for simplicity we
have restricted ourselves to the above statement.

We denote by $\dim S$ the Hausdorff dimension of a subset $S$ of
$\CC$ (or $\RR^n$). The following result is due to 
McMullen~\cite[Theorem~1.2]{McMullen87}.

\begin{theoremb} 
If $\lambda \in \CC\backslash \{0\}$, 
then $\dim J(E_\lambda)=2$. 
\end{theoremb}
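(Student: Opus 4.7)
The upper bound $\dim J(E_\lambda)\leq 2$ is immediate since $J(E_\lambda)\subset\CC$. To obtain $\dim J(E_\lambda)\geq 2$, I would construct, for each large parameter $R>0$, a compact subset $X(R)\subset J(E_\lambda)$ with $\dim X(R)\to 2$ as $R\to\infty$, by an iterated Cantor-like construction whose itineraries force the orbit to escape to infinity through the right half-plane. That every such escaping point lies in $J(E_\lambda)$ follows from the fact that $E_\lambda$ belongs to the Eremenko--Lyubich class (its only finite singular value is~$0$); hence the escaping set $I(E_\lambda)$ is contained in $J(E_\lambda)$, and it suffices to arrange $X(R)\subset I(E_\lambda)$.

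The construction exploits that $E_\lambda$ is $2\pi i$-periodic and maps each horizontal strip of height $2\pi$ onto $\CC\setminus\{0\}$. Fix $R>0$ large. In the half-plane $\{\re z\geq R\}$ one has $|E_\lambda'(z)|=|\lambda|e^{\re z}\geq|\lambda|e^R$, and $E_\lambda$ is univalent on each fundamental horizontal strip. Tile this half-plane by rectangles $Q=[x,x+1]\times[y,y+2\pi]$ with $x\geq R$; each such $Q$ is mapped bijectively by $E_\lambda$ onto the full round annulus
\[
A_x=\{w:|\lambda|e^x\leq|w|\leq|\lambda|e^{x+1}\}.
\]
Inside $A_x$ one fits a disjoint family of small sub-annular sectors of bounded modulus, chosen so that their total area covers $A_x$ up to a proportion $1-\eta(x)$ with $\eta(x)\to 0$ as $x\to\infty$, and chosen so that their preimages under the univalent branch of $E_\lambda^{-1}$ on $Q$ have real parts strictly greater than $x$. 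These preimages form a disjoint family of ``children'' $\widetilde Q'\subset Q$. Since this inverse branch is an explicit branch of the logarithm, whose derivative $1/|w|$ has bounded oscillation on each sub-annulus of bounded modulus, each $\widetilde Q'$ is a quasi-rectangle with $\diam\widetilde Q'\asymp e^{-x}\diam E_\lambda(\widetilde Q')$ and with eccentricity bounded independently of~$x$.

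Starting from a single tile $Q_0$ at level $R$ and iterating produces a nested sequence $Q_0\supset X_1\supset X_2\supset\cdots$; its intersection $X=X(R)$ consists of points whose forward orbits visit tiles whose real-part levels tend to infinity, so those orbits escape to infinity through $\{\re z\geq R\}$, giving $X\subset I(E_\lambda)\subset J(E_\lambda)$. Distributing mass uniformly among the children at each branching defines a Borel probability measure $\mu$ supported on $X$, and a standard mass-distribution computation using the diameter bounds above together with the density $1-\eta(x)$ yields $\mu(B(z,r))\leq C r^{2-\delta(R)}$ for all small $r$, with $\delta(R)\to 0$ as $R\to\infty$. The mass distribution principle then gives $\dim X(R)\geq 2-\delta(R)$, and letting $R\to\infty$ yields $\dim J(E_\lambda)\geq 2$.

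The main obstacle is the uniform control of the density $1-\eta(x)$ and of the distortion of the pullback rectangles across infinitely many generations. For the density one must design the sub-annular covers so that $\eta(x)\to 0$ with a quantitative rate. For the distortion one uses that branches of the logarithm have distortion bounded in terms of the modulus of the underlying sub-annulus; since all sub-annuli in the construction are chosen of bounded modulus, the cumulative distortion along any branch of the construction is absorbed into a constant independent of the depth of iteration.
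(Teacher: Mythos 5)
Your overall strategy---pulling back admissible annular tiles to build a Cantor set of escaping points and applying a mass-distribution estimate---is exactly McMullen's, and it is the same method the paper adapts in Section~\ref{hausdorffhairs} for the Zorich map. But the quantitative density claim is wrong, and it conceals a misunderstanding of where the dimension $2$ actually comes from. The tiles whose forward orbits continue to escape must lie in a half-plane $\{\re z\geq R\}$, and the fraction of the round annulus $A_x$ lying in that half-plane tends to roughly $\tfrac12$ as $x\to\infty$, not to~$1$; so $\eta(x)\not\to 0$, and the bound $\mu(B(z,r))\leq Cr^{2-\delta(R)}$ with $\delta(R)\to 0$ cannot be obtained the way you propose.

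McMullen's key observation (\cite[Prop.~2.2]{McMullen87}, reproduced as Lemma~\ref{lemmamcm} in the paper) is the opposite tradeoff: the density per generation need only be bounded below by a fixed $\delta>0$, because the diameters $d_k$ of the $k$-th generation tiles decay like $\exp\bigl(-E^k(t)\bigr)$, a tower of exponentials, while the cumulative density penalty $\sum_{j\leq k}|\log\Delta_j|$ grows only polynomially in $k$. Hence the $\limsup$ in that criterion is $0$ and one gets $\dim=2$ exactly, for any fixed starting level~$R$, with no limiting argument in~$R$. As written your plan would stall precisely at the step where you try to make $\eta(x)$ small; replacing that step by a constant lower density bound, and invoking the tower-fast diameter decay, is what makes the argument go through.
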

In the situation of Theorem~A the union of the 
hairs thus has Hausdorff dimension~$2$.
Karpi\'nska~\cite[Theorem~1.1]{Karpinska99} proved the surprising
and seemingly paradoxical result that
this changes if one removes the endpoints of the hairs.

\begin{theoremc} Let $0<\lambda<1/e$ and let $C_\lambda$ be the
set of endpoints of the Devaney hairs that form $J(E_\lambda)$. 
Then $\dim (J(E_\lambda)\backslash C_\lambda)=1$. 
\end{theoremc}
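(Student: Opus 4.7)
The lower bound $\dim(J(E_\lambda)\setminus C_\lambda)\geq 1$ is immediate: by Theorem~A the set contains a Devaney hair with its endpoint removed, which is a connected subset of $\CC$ with more than one point, and any such set has Hausdorff dimension at least~$1$.

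For the upper bound I would combine the symbolic dynamics of Devaney--Krych with a covering argument organised by iterated pre-images. Partition the right half-plane into the horizontal strips
$$
S_n=\{z\in\CC:\re z\geq 0,\ (2n-1)\pi<\im z\leq(2n+1)\pi\},\qquad n\in\ZZ,
$$
and, for each $z\in J(E_\lambda)$ whose orbit eventually enters the right half-plane, record its itinerary $s(z)=(s_0,s_1,\ldots)\in\ZZ^{\NN}$ by $E_\lambda^k(z)\in S_{s_k(z)}$. A key first step is the analytic characterisation of non-endpoints: $z\in J(E_\lambda)\setminus C_\lambda$ if and only if $\re E_\lambda^k(z)\to\infty$, and in that regime the real parts grow like a tower, $\re E_\lambda^{k+1}(z)\asymp e^{\re E_\lambda^k(z)}$ up to bounded errors.

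To bound the dimension by an arbitrary $\alpha>1$, fix a bounded set $B\subset\CC$ and a large threshold $R$. For each $z\in(J(E_\lambda)\setminus C_\lambda)\cap B$ let $N=N(z)$ be the first time at which $\re E_\lambda^N(z)>R$. The identity $|(E_\lambda^N)'(z)|=|\lambda|^N\exp\bigl(\sum_{k=0}^{N-1}\re E_\lambda^k(z)\bigr)$, together with Koebe-type distortion applied to the inverse branch of $E_\lambda^N$ specified by $(s_0,\ldots,s_{N-1})$, implies that a unit box around $E_\lambda^N(z)$ pulls back to a neighbourhood $V$ of $z$ of diameter
$$
\diam V\asymp|\lambda|^{-N}\exp\Bigl(-\sum_{k=0}^{N-1}\re E_\lambda^k(z)\Bigr).
$$
The pieces $V$, indexed by $(s_0,\ldots,s_{N-1})$ and the target unit box, cover $(J(E_\lambda)\setminus C_\lambda)\cap B$.

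The hard part, and the main obstacle, is controlling the multi-index sum $\sum(\diam V)^\alpha$. Organising the cells as a tree with level $N$ corresponding to itineraries $(s_0,\ldots,s_{N-1})$, the number of admissible strips $s_N$ that may appear at the next step is at most of order $|E_\lambda^N(z)|\lesssim e^{\re E_\lambda^{N-1}(z)}$, whereas each refinement shrinks the diameter by a factor $\asymp e^{-\re E_\lambda^N(z)}$. Because $\re E_\lambda^N(z)\asymp e^{\re E_\lambda^{N-1}(z)}$ for non-endpoints, the ratio of branching to shrinking is minuscule for $\alpha>1$, and iterating produces a total $\alpha$-content of order $e^{-(\alpha-1)R}$, which vanishes as $R\to\infty$. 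Since $\alpha>1$ is arbitrary, $\dim(J(E_\lambda)\setminus C_\lambda)\leq 1$.
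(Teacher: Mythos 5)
Your lower bound is fine: $J(E_\lambda)\setminus C_\lambda$ contains a hair minus its endpoint, a connected set with more than one point, hence has Hausdorff dimension at least~$1$.

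The upper bound has a fatal gap rooted in a wrong characterisation of non-endpoints. The condition $\re E_\lambda^k(z)\to\infty$ is equivalent to $z\in I(E_\lambda)$, and McMullen proved $\dim I(E_\lambda)=2$. Since $C_\lambda\cap I(E_\lambda)$ is non-empty (indeed of dimension~$2$), an argument based solely on escape and tower-growth of $\re E_\lambda^k$ cannot distinguish $J(E_\lambda)\setminus C_\lambda$ from $I(E_\lambda)$, and if it were valid it would contradict Theorem~B.

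The concrete error is in the branching count. The cells small enough for a Hausdorff estimate are pullbacks of \emph{unit boxes} under $E_\lambda^N$, and passing from level $N$ to $N+1$ one must count all unit boxes in $E_\lambda(U)$ that meet the target set. The set $E_\lambda(U)$ is an annular sector of radii comparable to $e^{\re E_\lambda^N(z)}$ with angular width of order~$1$; its area is of order $e^{2\re E_\lambda^N(z)}$, so without further restriction the branching is of order $e^{2\re E_\lambda^N(z)}$, not $e^{\re E_\lambda^{N-1}(z)}$. Counting only the admissible strips $s_N$ ignores the radial extent, which contributes another factor of order $e^{\re E_\lambda^N(z)}$. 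With the corrected branching $e^{2\re E_\lambda^N(z)}$ and per-step shrinking $e^{-\re E_\lambda^N(z)}$, the ratio $N_kd_k^\alpha$ scales like $e^{(2-\alpha)\re E_\lambda^N(z)}$, which yields only $\dim\leq 2$.

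The missing idea is exactly the one driving section~\ref{hausdorffwithout} of the paper (Karpi\'nska's argument, transported to Zorich maps): non-endpoints eventually escape inside the thin parabola-like region $\Omega=\{z:\re z>c,\ |\im z|<\psi(\re z)\}$ with $\psi(x)=\exp(\sqrt{\log x})$. For $z=g_{\underline{s}}(t)$ with $t>t_{\underline{s}}$, the defining property $|s_k|/E^k(t')\to 0$ for $t_{\underline{s}}<t'<t$ combines with~\eqref{5b} to give $|\im E_\lambda^k(z)|=o\bigl(\psi(\re E_\lambda^k(z))\bigr)$; this is the content of Lemma~\ref{lemma6}. Restricting the covering to $\Omega$ cuts the imaginary extent of the relevant portion of $E_\lambda(U)$ from order $e^{\re E_\lambda^N(z)}$ to order $\psi\bigl(e^{\re E_\lambda^N(z)}\bigr)$, so the branching drops to order $\psi\bigl(e^{\re E_\lambda^N(z)}\bigr)\,e^{\re E_\lambda^N(z)}$. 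Since $\psi(x)=o(x^\varepsilon)$ for every $\varepsilon>0$ by~\eqref{5a}, the covering argument now closes for every $\alpha>1$. Your tower-growth observation about $\re E_\lambda^k$ is not the discriminating feature of non-endpoints; what matters is the relative smallness of $|\im E_\lambda^k(z)|$.
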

Of course, it follows from Theorem~B and C that
$\dim C_\lambda=2$. This had been proved before also by 
Karpi\'nska~\cite[Theorem~1]{Karpinska99a}.

The \emph{escaping set} 
$I(f):=\left\{z:\lim_{k\to \infty}f^k(z)=\infty\right\}$
plays an important role in complex dynamics. It was introduced by
Eremenko~\cite{Eremenko89} who showed that $J(f)=\partial
I(f)$ for every entire function~$f$.
Eremenko and Lyubich~\cite[Theorem~1]{Eremenko92} proved that 
$I(f)\subset J(f) = \overline{I(f)}$  for a large class of functions~$f$, which in
particular contains the functions~$E_\lambda$.
McMullen actually proved that~$\dim I(E_\lambda)=2$.
We mention that the results of Devaney and Krych yield that
$J(E_\lambda)\setminus C_\lambda\subset I(E_\lambda)$
while $C_\lambda$ contains points of both
$I(E_\lambda)$ and $\CC\setminus I(E_\lambda)$.

There are a large number of papers on dynamics of exponential
functions. We refer to a detailed (130 pages) survey 
by
Devaney~\cite{Devaney}, but note that the papers by Rempe~\cite{Rempe06}
and Schleicher~\cite{Schleicher03a} also contain
substantial sections devoted to a survey of the area.

We now turn to quasiregular maps. We treat this topic rather
briefly and refer to Rickman's monograph~\cite{Rickman93} for a
detailed discussion. Let $n\geq 2$ and let $G\subset \RR^n$ be a
domain. We denote the (euclidean) norm of $x\in \RR^n$ by~$|x|$. A
continuous map $f:G\to \RR^n$ is called \emph{quasiregular} if it
belongs to the Sobolev space $W^1_{n,loc}(G)$ and if there exists
a constant $K_O\geq 1$ such that
\begin{equation}\label{1a} 
|Df(x)|^n\leq K_O J_f(x)
\quad \mbox{ a.e.},
\end{equation}
where $Df(x)$ denotes the derivative,
\[
|Df(x)|:=\sup_{|h|=1} |Df(x)(h)|
\]
its norm, and $J_f(x)$ the Jacobian determinant.
With
\[
\ell(Df(x)):=\inf_{|h|=1}|Df(x)(h)|
\]
the condition that~\eqref{1a} holds 
for some $K_O\geq 1$ is equivalent to the condition that
\begin{equation}\label{1b}
J_f(x)\leq K_I\ell(Df(x))
\quad \mbox{ a.e.},
\end{equation}
for some $K_I\geq 1$. The smallest constants
$K_O$ and $K_I$ for which~\eqref{1a} and~\eqref{1b} hold are
called the \emph{outer and inner dilatation} of $f$ and
$K:=\max\{K_I,K_O\}$ is called the (maximal) \emph{dilatation} of
$f$; see~\cite{Rickman93} for more details.

An important 
example of a quasiregular map
$F:\RR^3\to \RR^3$ was given by Zorich~\cite[p.~400]{Zorich67}; see 
also~\cite[\S 6.5.4]{Iwaniec01} and~\cite[\S I.3.3]{Rickman93}. 
This map 
can be considered as a
three-dimensional analogue of the exponential function. To
describe it, we follow~\cite{Iwaniec01} and consider the
square
\[
Q:=\left\{(x_1,x_2)\in \RR^3:\; |x_1|\leq 1, |x_2|\leq 1\right\}
\]
and the upper hemisphere
\[
U:=\left\{(x_1,x_2,x_3)\in \RR^3:\; x_1^2+x_2^2+x_3^2=1,\;x_3\geq0\right\}.
\]
Let $h:Q\to U$ be a bilipschitz map and define 
\[
F:Q\times \RR \to \RR^3,\
F(x_1,x_2,x_3)=e^{x_3}h(x_1,x_2).
\]
Then $F$ maps the ``infinite square beam'' $Q\times \RR$ 
to the upper half-space. Repeated reflection along 
sides of square beams and the $(x_1,x_2)$-plane 
yields a map
$F:\RR^3\to \RR^3$. It turns out that this map $F$ is
quasiregular. Its dilatation is bounded in terms of the
bilipschitz constant of~$h$. We call a function $F$ defined this
way a \emph{Zorich map}.

We note that 
$ F(x_1+4,x_2,x_3)=F(x_1, x_2+4,x_3)=F(x_1,x_2,x_3) $
for all $x\in \RR^3$ so that~$F$ is ``doubly periodic''.

If $DF(x_1,x_2,0)$ exists, then
\begin{equation}\label{1c}
DF(x_1,x_2,x_3)=e^{x_3}DF(x_1,x_2,0),
\end{equation}
and this implies that there exist $\alpha,m,M\in \RR$ with
$0<\alpha<1$ and $m<M$ such that
\begin{equation}\label{1d}|DF(x_1,x_2,x_3)|\leq\alpha \quad 
\mbox{ a.e. for }x_3\leq m
\end{equation}
while
\begin{equation}\label{1e}\ell(DF(x_1,x_2,x_3))\geq 
\frac{1}{\alpha}\quad \mbox{ a.e. for }x_3\geq M.
\end{equation}
We now choose
\begin{equation}\label{1f}a\geq e^M-m
\end{equation}
and consider the map 
\[
f_a:\RR^3\to\RR^3,\ 
f_a(x)=F(x)-(0,0,a).
\]
We will use the term Zorich map also for the functions~$f_a$.

We now state the result which can be seen as a three-dimensional
analogue of Theorems~A, B and C.

\begin{theorem}\label{thm1}
Let $f_a$ be a Zorich map with parameter $a$
satisfying~\eqref{1f}. Then there exists a unique fixed point
$\xi=(\xi_1,\xi_2,\xi_3)$ satisfying $\xi_3\leq m$, the
set
\[
J:=\left\{x\in \RR^3:\; f_a^k(x)\not\rightarrow \xi\right\}
\]
consists of uncountably many pairwise disjoint hairs, and the set $C$
of endpoints of these hairs has Hausdorff dimension $3$ while
$J\backslash C$ has Hausdorff dimension~$1$.
\end{theorem}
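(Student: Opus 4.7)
The plan is to transpose the three classical arguments---Devaney--Krych for the hair structure, McMullen for dimension three, and Karpi\'nska for dimension one of the complement of endpoints---into the quasiregular setting, exploiting the explicit product form $F(x_1,x_2,x_3)=e^{x_3}h(x_1,x_2)$ on a fundamental beam together with the bounds~\eqref{1d} and~\eqref{1e}.

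The fixed point~$\xi$ would be produced by Banach's theorem: by~\eqref{1d}, $F$ is an $\alpha$-contraction on the half-space $H_-:=\{x_3\leq m\}$, and~\eqref{1f} is arranged precisely so that $f_a(H_-)\subset H_-$. This yields a unique $\xi\in H_-$ and shows $H_-\subset A(\xi)$. For the hair structure I would decompose $\{x_3>M\}$ into the translated beams $T_{i,j}=\{x\in\RR^3: x_3>M,\ (x_1-4i,x_2-4j)\in Q\}$ indexed by $(i,j)\in\ZZ^2$, on each of which $f_a$ is a quasiconformal homeomorphism onto essentially the complement of $A(\xi)$. Following Devaney--Krych, to every sequence $\underline{s}=(s_0,s_1,s_2,\ldots)\in(\ZZ^2)^{\NN}$ with controlled growth of symbols one associates the nested intersection $T_{s_0}\cap f_a^{-1}(T_{s_1})\cap f_a^{-2}(T_{s_2})\cap\cdots$; the expansion~\eqref{1e} together with the factor $e^{-x_3}$ implicit in~\eqref{1c} forces the $(x_1,x_2)$-cross-sections of this intersection to shrink super-exponentially along preimages, so the intersection is a single hair $H_{\underline{s}}$ parametrized monotonically by the $x_3$-coordinate with endpoint at the bottom.

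For $\dim(J\setminus C)\leq 1$: a point in $J\setminus C$ lies in the escaping set, and along its orbit the contraction factors $e^{-f_a^k(x)_3}$ shrink tubular neighbourhoods of hair-segments super-exponentially; a direct covering argument then yields a bound on Hausdorff $(1+\varepsilon)$-content and hence $\dim\leq 1$, matched by the trivial lower bound because hairs are curves. For $\dim C\geq 3$ I would mimic McMullen's density argument: by~\eqref{1c} the Jacobian of $F$ grows like $e^{3x_3}$, so iterated preimages of a fixed large ball cover a definite proportion of each beam at every generation; a Cantor-like nested construction then produces a subset of $C$ of positive upper $\mathcal{H}^s$-density for every $s<3$. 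The upper bound $\dim C\leq 3$ is automatic in $\RR^3$.

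The principal technical obstacle will be replacing the conformal (Koebe) distortion estimates of McMullen and Karpi\'nska by quasiregular analogues. The explicit form of~$F$ is decisive here: on each beam, $F$ is the composition of the bilipschitz map~$h$ with the radial dilation $x\mapsto e^{x_3}x$, so inverse branches of $f_a^n$ decompose into pieces of uniformly bounded distortion, and the geometric-series estimates required for both Hausdorff-dimension statements can then proceed in parallel with the exponential case. A secondary point is that the endpoint/non-endpoint dichotomy must be verified from the monotonicity of the $x_3$-coordinate along hairs combined with the contraction on $H_-$.
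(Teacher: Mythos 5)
Your high-level route matches the paper's (Banach for the fixed point, Devaney--Krych/Schleicher--Zimmer itineraries for the hairs, McMullen's nested-density construction for the lower dimension bound, Karpi\'nska's covering argument for the complement of endpoints), but there is a genuine gap in the Karpi\'nska step. You reduce $\dim(J\setminus C)\leq 1$ to the super-exponential shrinkage of preimages under inverse branches, but that shrinkage alone gives only the trivial bound $\dim\leq 3$: the full union $J$ of all hairs also shrinks in exactly this way under pullback, and it has dimension $3$. The missing device is that a non-endpoint $x=g_{\underline{s}}(t)$ with $t>t_{\underline{s}}$ escapes inside a thin region $\Omega=\{x_1^2+x_2^2<\psi(x_3)^2\}$ with $\psi(y)=\exp\bigl(\sqrt{\log y}\bigr)$, because its itinerary norms $|s_k|$ grow strictly slower than $E^k(t)$ and hence slower than $\psi(E^k(t))$. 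Consequently the part of $J\setminus C$ inside a ball of radius $\sim E^k(t)$ around $f^k(x)$ occupies a slab of cross-section $O\bigl(\psi(E^k(t))^2\bigr)$ rather than $O\bigl(E^k(t)^2\bigr)$. It is this cross-sectional thinness, not the preimage contraction, that reduces the count of unit cubes needed from $\sim E^k(t)^3$ to $\sim\psi(E^k(t))^2\,E^k(t)$ and drives the Hausdorff exponent down to $1$. Without an analogue of $\Omega$ your covering argument does not close.

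Two secondary issues. Your symbol set $T_{i,j}$, built from period-$4$ translates of $Q$, covers only a quarter of each level plane; the beams mapping onto the upper half-space are $P(r)\times(M,\infty)$ for $r=(r_1,r_2)$ with $r_1+r_2$ even -- a checkerboard -- and $J$ lives in the union over this larger alphabet, so your indexing omits half the admissible symbols. Separately, your McMullen construction is asserted to produce a subset of $C$, but what it naturally yields is a $3$-dimensional subset of $J$ (indeed of the escaping set), and it is not immediate that those points are endpoints of their hairs. The cleaner route, and the one the paper takes, is to prove $\dim J=3$ and then deduce $\dim C=3$ from $\dim(J\setminus C)=1$, bypassing the question. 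Finally, the relative densities $\Delta_k$ in the nested construction are not bounded below by a fixed $\Delta>0$ here; they decay geometrically like $\eta^k$, which is harmless only because the diameters decay like an iterated exponential -- exactly the regime in which McMullen's Proposition~2.2 still yields dimension~$n$ -- and this balance should be stated rather than assumed.
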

For $0<\lambda<1/e$ 
the set $C_\lambda$ of endpoints of the hairs in
$J(E_\lambda)$
can also be characterized as
the set of points which are accessible from 
the attracting basin $A(\xi)$; 
see~\cite[Corollary~4.7]{Devaney87} and also~\cite{Karpinska99}. 
It turns out
that the situation is different for Zorich maps.
\begin{theorem}\label{thm2}
Let $J$ be as in Theorem~$\ref{thm1}$.
Then all points of $J$ are accessible from~$\RR^3\backslash J$.
\end{theorem}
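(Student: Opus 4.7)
The plan is to exploit that in three dimensions a hair is a one-dimensional curve and therefore does not locally separate $\RR^3$, so that the dense Cantor-bouquet structure of the hairs in $J$---which forces non-accessibility in the planar setting of Theorem~C---has too little thickness to block access in $\RR^3$. For each $x\in J$ on a hair $H=H_{\bar s}$ I seek a short arc ending at $x$ whose other points lie in $\RR^3\setminus J=A(\xi)$; since $A(\xi)$ is open and, being an attracting basin, path-connected, such a local arc extends to a full accessing arc.

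I would first construct such a local arc at high altitude. From the construction underlying Theorem~\ref{thm1}, each hair $H_{\bar s}$ has a tail contained, above some altitude $x_3\geq T$, in a single half-infinite beam determined by $s_0$, and within each beam the tails are parametrised injectively by the tail symbols with exponentially contracting dependence on them. Consequently, around a point $y$ on the tail of $H$ there is $\eps>0$ such that $J\cap B(y,\eps)$ is a union of finitely many pairwise disjoint one-dimensional arcs in the three-dimensional ball, so a short segment transverse to $H$ at $y$ can be chosen to avoid them and provide an accessing arc for $y$. For an arbitrary escaping point $x\in J\cap I(f_a)$---which includes all of $J\setminus C$ together with the escaping endpoints---some iterate $f_a^n(x)$ lies above altitude $T$, and pulling the accessing arc at $f_a^n(x)$ back through the branch of $f_a^{-n}$ sending $f_a^n(x)$ to $x$ (well defined on a small neighbourhood avoiding the branch set of $f_a^n$) yields an accessing arc at $x$; complete invariance of $A(\xi)$ guarantees that the pulled-back arc lies in $A(\xi)$ except at $x$.

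The main obstacle is the case $x\in C\setminus I(f_a)$: the forward orbit of such a non-escaping endpoint may remain in a bounded region, so the high-altitude reduction fails. For these points I would argue directly, using that $x$ is topologically extremal on its hair $H$---a small neighbourhood meets $H$ only in a single half-arc with $x$ as endpoint---and threading an accessing arc into $A(\xi)$ from the side of $x$ opposite to this half-arc. The delicate step is ruling out that infinitely many other hairs accumulate to $x$ in a pattern which locally blocks every approach; this would follow from the uniform bilipschitz control of the parameter-to-hair map coming from the Zorich construction, giving a quantitative form of the one-dimensional nature of individual hairs which is enough to leave transverse room in the three-dimensional ambient space.
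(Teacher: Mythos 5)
Your plan diverges from the paper's, and there are two genuine gaps that the paper's argument is specifically designed to avoid.

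First, the statement that ``around a point $y$ on the tail of $H$ there is $\eps>0$ such that $J\cap B(y,\eps)$ is a union of finitely many pairwise disjoint one-dimensional arcs'' is false. The hairs are indexed by the \emph{uncountable} set of admissible sequences, and near any tail point $y$ there are uncountably many other tails whose itineraries agree with $\underline{s}(y)$ in a long initial segment. Locally $J$ looks roughly like (two-dimensional Cantor set)$\,\times\,$(interval), not a finite union of arcs, and this Cantor bouquet accumulates on the tail of $H$ from every transverse direction. One does expect a transverse segment avoiding this Cantor structure, but that requires a genuine argument about the cross-sectional Cantor set, which you do not supply.

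Second, and more seriously, the case $x\in C\setminus I(f_a)$ is not handled: you describe it as ``the delicate step'' and give only the heuristic that uniform bilipschitz control of the parameter-to-hair map should leave ``transverse room.'' This is exactly the point where the proof needs content, and it is the place where your escaping/non-escaping dichotomy gives you no leverage. Worse, recall that $\dim C=3$ by Theorem~\ref{thm1}, so near a non-escaping endpoint the set $J$ is as thick as it can be in the Hausdorff sense, and nothing you say rules out a locally blocking accumulation pattern.

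The paper avoids both problems by not arguing locally at all and not distinguishing between escaping and non-escaping points. For \emph{every} $x\in J$ it recursively constructs points $y_k\in\RR^3\setminus J$ with $y_{k,3}=x_{k,3}$, $|y_k-x_k|\leq 4$, $f(y_k)\in H_{=M}$, together with curves $\gamma_k\subset\RR^3\setminus(J\cup B(0,\eta|x_k|))$ joining $f(y_{k-1})$ to $y_k$ with $\length(\gamma_k)\leq\mu|x_k|$. Pulling $\gamma_k$ back by $L_0\circ\cdots\circ L_{k-1}$ gives arcs $\Gamma_k$ with $\length(\Gamma_k)\leq (c_4\mu/\eta)\,\alpha^{k-1}$ and $\dist(x,\Gamma_k)\leq 4\alpha^k$; the geometric decay comes from the uniform contraction of the inverse branches on $H_{\geq M}$ (inequality~\eqref{2b}), not from $x_k\to\infty$, so the construction works for non-escaping $x$ as well. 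The accessing arc is then $\Gamma=\bigcup_k\Gamma_k\cup\{x\}$. Your high-altitude transversal idea, if one could make the Cantor-bouquet claim quantitative, would give a different and perhaps shorter argument for escaping points, but as written the central claims are either false or unproved, and the hard case is left open.
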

After some preliminaries in section~\ref{prelims}, the parts of
Theorem~\ref{thm1} that correspond to Theorems~A, B and C
will be proved in sections~\ref{constructionhairs},
\ref{hausdorffhairs} and \ref{hausdorffwithout}, respectively, while
Theorem~\ref{thm2} will be proved in section~\ref{proofthm2}.
Some examples of Zorich maps will be discussed in section~\ref{examples}.
Besides the techniques introduced by Devaney and Krych~\cite{Devaney84},
Karpi\'nska~\cite{Karpinska99} and 
McMullen~\cite{McMullen87} we will also use 
(in particular in section~\ref{constructionhairs})
some methods of 
Schleicher and Zimmer~\cite{Schleicher03} who obtained analogues of Theorems~A 
and~C for general parameter 
values~$\lambda$. 

We conclude this introduction with a number of remarks.

\begin{remark}
Instead of the square
$Q=\left\{(x_1,x_2)\in \RR^3:|x_1|\leq 1, |x_2|\leq 1\right\}$
we could have taken any rectangle
$Q=\left\{(x_1,x_2)\in \RR^3:|x_1|\leq  c_1, |x_2|\leq c_2\right\}$
in the construction of $F$ and~$f_a$.
In particular, we may take $c_1=\frac12 \pi$ and 
choose the  function $h:Q\to U$ such that
$h(x_1,0)=(\sin x_1, 0,\cos x_1)$.
Then $F(x_1,0,x_3)=\left(e^{x_3} \sin x_1, 0,e^{x_3}\cos x_1\right)$.
The function~$F$ thus leaves the $(x_1,x_3)$-plane
invariant and its restriction to this plane is conjugate
to the exponential function in the plane.
In fact, with
$\phi:\left\{(x_1,0,x_3):x_1,x_3\in \RR\right\}\to\CC$, $\phi(x_1,0,x_3)=x_3+ix_1$,
we have
$\phi\circ F\circ\phi^{-1} =\exp$.
This underlines that Zorich maps can be seen as three-dimensional
analogues of the exponential function.
\end{remark}

\begin{remark}\label{rembranch}
Schleicher and Zimmer~\cite{Schleicher03} have shown that 
$I(E_\lambda)$ consists of pairwise disjoint, injective, unbounded
curves for all $\lambda\in
\CC\backslash \{0\}$. 
For general Zorich maps $f_a(x)=F(x)-a$
with $a\in \RR^3$ 
the situation is different,
since -- in contrast
to the exponential function -- Zorich maps have branch points.
In fact, the branch points
of $F$ are the edges of the square beam $Q\times \RR$ and the
lines obtained from these edges by reflection. While it is
possible to construct ``tails'' of hairs by the methods employed
in this paper, complications arise when a branch point is mapped
onto such a tail by an iterate of~$F$.

We mention that Zorich~(\cite{Zorich67}, 
see also~\cite[Corollary III.3.8]{Rickman93}) proved a conjecture
of Lavrent'ev saying that if $n\geq3$ and
$f:\RR^n\to \RR^n$ is quasiregular and not bijective, then $f$ has
branch points.  In fact, the branch set has Hausdorff
dimension at least $n-2$;
see~\cite[p.~12]{Rickman93} and~\cite{Heinonen} for further discussion.
\end{remark}

\begin{remark}
While~\cite{Schleicher03} extends many results from
the special case $0<\lambda<1/e$ to 
the general case, 
some striking differences 
between these cases
were found by
Bara\'nski, Karpi\'nska and Zdunik~\cite{BKZ}:
if $E_\lambda$ has an attracting periodic 
point of period greater than~$1$, then the 
Hausdorff dimension of the boundary of 
each Fatou component is strictly between~$1$ and~$2$.
\end{remark}

\begin{remark}
The exponential maps $E_\lambda$ have at most one attracting
periodic cycle. 
Zorich maps may have infinitely many attracting fixed points. There may also 
be saddle points and
one-dimensional attractors; see section~\ref{examples} for more details.
\end{remark}

\begin{remark}
The existence of Devaney hairs is not a special feature
of the exponential function. That such hairs exist in large
classes of entire functions was already shown by
Devaney and Tangerman~\cite{Devaney86} in 1986,
and more recently for considerably wider
classes by Bara\'nski~\cite[Theorem~C]{Bar07}
and by Rottenfu{\ss}er, R\"uckert, Rempe and 
Schleicher~\cite[Theorem~1.2]{RRRS}.
We note that Bara\'nski~\cite{Bar08} has in fact shown
that analogues to Theorems~A, B and C hold if the exponential function is
replaced by an entire functions of finite order
for which the set of singularities of the inverse is bounded.
\end{remark}

\begin{remark}
Karpi\'nska's paradox
becomes even more striking 
in the sine family; that is, for maps $f$
of the form
$f(z)=\sin(\alpha z +\beta)$ where $\alpha,\beta\in\CC$, $\alpha\neq 0$.
Devaney and Tangerman~\cite[Theorem~4.1]{Devaney86} showed that 
for suitable parameters
the  Julia set consists  of
hairs and
McMullen~\cite[Theorem~1.1]{McMullen87} proved that the Julia set
and the escaping set have positive measure for maps in the sine
family.
Karpi\'nska~\cite[Theorem~3]{Karpinska99a} then proved that 
already the set of endpoints of the hairs has positive measure.
A particularly strong form of Karpi\'nska's paradox
was obtained by Schleicher~\cite{Schleicher07}
for postcritically finite maps in the sine family:
the union of
the hairs without endpoints has Hausdorff dimension~$1$, but
every point in the plane which is not in this union is the
endpoint of at least one hair.

In dimension greater than~$2$
it will be more difficult to construct postcritically finite
quasiregular maps
since here, as already
noted in Remark~\ref{rembranch}, the branch set is much larger.
\end{remark}

\begin{remark}
Mayer~\cite{Mayer90} has shown that if $0<\lambda<1/e$, then the set 
$C_\lambda$ of endpoints of the Devaney hairs is totally disconnected 
while $C_\lambda\cup\{\infty\}$ is connected. 
There are a number of other striking topological phenomena connected to the 
dynamics of exponential functions, see~\cite{Devaney} for a survey.
It is to be expected that the dynamics of Zorich maps 
are also interesting from the topological point of view.
\end{remark}

\begin{remark}
A quasiregular map $f$ is called \emph{uniformly quasiregular}
if the dilatation of the iterates $f^k$ has an upper bound which
does not depend on~$k$. For uniformly quasiregular maps
$f:\overline{\RR^n}\to\overline{\RR^n}$, where
$\overline{\RR^n}=\RR^n\cup \{\infty\}$, an iteration
theory in the spirit of Fatou and Julia has been developed
by Hinkkanen, Martin, Mayer and others~\cite{Hinkkanen04,Martin97,Mayer97};
see~\cite[Chapter~21]{Iwaniec01} for an introduction. In
principle it would also be possible to develop such a theory for
uniformly quasiregular maps $f:\RR^n\to \RR^n$. However,
for $n\geq 3$ no examples of such maps which
do not extend to quasiregular self-maps of $\overline{\RR^n}$
are known. Uniformly
quasiregular self-maps of $\RR^2$ (or $\overline{\RR^2}$) are
quasiconformally conjugate to entire (or rational) maps~\cite{Geyer94,
Hinkkanen96,Kisaka08}. There are only few  results on the dynamics of
quasiregular self-maps of~$\RR^n$ with $n\geq 3$:
in~\cite{Bergweiler08} it is proved that
if $f$ has an essential singularity at~$\infty$,
then $I(f)\neq\emptyset$. In fact, $I(f)$ has an unbounded
component for such maps~$f$.
(For entire~$f$ this was proved in~\cite{RS}.)
We also mention~\cite{Siebert06} where it is shown that
quasiregular self-maps of~$\RR^n$ with an essential singularity
at $\infty$ have periodic points of all periods greater
than~$1$.
\end{remark}

\begin{remark}
Zorich maps may also be defined in $\RR^n$ for $n\geq 4$;
see~\cite{Martio75}. While it seems that the methods of
this paper extend to this more general case, we have restricted to
the case $n=3$ for simplicity.  We note that
Iwaniec and Martin~\cite{Iwaniec01}, whose presentation we have
followed in the definition of Zorich maps,
also confine themselves to the case $n=3$. Restriction to this case thus
allows to use their results directly.
\end{remark}

\section{Preliminaries}\label{prelims}
We suppress the index $a$ and 
write $f=(f_1,f_2,f_3)$
instead of~$f_a$. For $r=(r_1,r_2)\in \ZZ^2$ we put
\[
P(r)=P(r_1,r_2):=
\left\{(x_1,x_2)\in \RR^2:\;|x_1-2r_1|<1,\;|x_2-2r_2|<1\right\}
\] 
so that $P(0,0)$ is the interior of~$Q$. For $c\in \RR$ we define the half-space
\[
H_{> c}:=\left\{(x_1,x_2,x_3)\in \RR^3:\; x_3> c\right\}.
\]
The half-spaces $H_{<c}$, $H_{\geq c}$ and $H_{\leq c}$
and the plane  $H_{=c}$
are defined
analogously. Now $F$ maps $P(r_1,r_2)\times \RR$ bijectively onto
$H_{>0}$ if $r_1+r_2$ is even and bijectively onto $H_{< 0}$ if
$r_1+r_2$ is odd. Thus $f$ maps $P(r_1,r_2)\times \RR$ bijectively
onto $H_{>-a}$ or $H_{<-a}$, depending on whether $r_1+r_2$
is even or odd. For 
\[
r=(r_1,r_2)\in
S:=\left\{(s_1,s_2)\in \ZZ^2:\; s_1+s_2\ \text{even}\right\}
\] 
we define
\[
T(r):=P(r)\times (M,\infty).
\]
Since
$f(P(r)\times \RR)=H_{> -a}$  for $r\in S$ and
\begin{equation}\label{2a1}
f_3(x_1,x_2,x_3)= e^{x_3}h_3(x_1,x_2) -a\leq e^{x_3}-e^M+m\leq m<M\quad \text{for } x_3\leq M
\end{equation}
and hence $f(P(r)\times (-\infty,M])\subset H_{<M}$ we
see that $f(T(r))\supset H_{\geq M}$.
Thus there exists a branch $\Lambda^r:H_{\geq M}\to T(r)$ of the
inverse function of~$f$.
With
$\Lambda:=\Lambda^{(0,0)}$
we have
\[
\Lambda^{(r_1,r_2)}(x)=\Lambda(x)+(2r_1,2r_2,0)
\]
for all $x\in H_{\geq M}$ and all $r\in S$.
 
We shall need
some estimates for the derivative~$D\Lambda^r$. Since
$D\Lambda^r(x)=D\Lambda(x)$ whenever these derivatives exist
it 
suffices to obtain these estimates for~$D\Lambda$.
First we note that
\begin{equation}\label{2a}D\Lambda(x)=Df(\Lambda(x))^{-1}
\end{equation}
for $x\in H_{\geq M}$. Since $\Lambda(x)\in T(0,0)\subset H_{\geq
M}$ for $x\in H_{\geq M}$ and since $DF=Df$ we deduce from~\eqref{1e} that
$|D\Lambda(x)|\leq\alpha$ a.e. for $x\in H_{\geq M}$. This
implies that
\begin{equation}\label{2b}
\left|\Lambda(x)-\Lambda(y)\right|\leq|x-y|
\esssup\limits_{z\in [x,y]}\left|D\Lambda(z)\right|\leq \alpha |x-y|
\quad\text{for } x,y\in H_{\geq M}.
\end{equation}
 Next we note that~\eqref{1c} implies that
there exist positive constants $c_1$ and~$c_2$ such that
\begin{equation}\label{2c}c_1e^{x_3}\leq\ell
\left(Df(x_1,x_2,x_3)\right)\leq|Df(x_1,x_2,x_3)|\leq c_2e^{x_3}
\quad\text{a.e.} 
\end{equation}
Noting that
\begin{equation}\label{2c1}
|f(y_1,y_2,y_3)|-a \leq 
|F(y_1,y_2,y_3)|=e^{y_3}\leq |f(y_1,y_2,y_3)|+a
\end{equation}
for all $(y_1,y_2,y_3)\in \RR^3$ 
we deduce from~\eqref{2a} and~\eqref{2c} that
\[
\ell\left(D\Lambda(x)\right)\geq 
\frac{1}{\left|DF(\Lambda(x))\right|}\geq \frac{1}{c_2 \exp
\left(\Lambda_3(x)\right)}
\geq
\frac{1}{c_2\left(\left|f\left(\Lambda(x)\right)\right|+a\right)}=\frac{1}{c_2(|x|+a)}
\quad\text{a.e.} 
\]
Thus there exists $c_3>0$ such that
\begin{equation}\label{2e}\ell\left(D\Lambda(x)\right)\geq
\frac{c_3}{|x|}
\quad\text{a.e.} 
\end{equation} 
for $x\in H_{\geq M}$. Similarly
we have
\begin{equation}\label{2f}\left|D\Lambda(x)\right|\leq\frac{c_4}{|x|}
\quad\text{a.e.} 
\end{equation}
for some constant $c_4>0$, as well as
\begin{equation}\label{2g}\frac{c_5}{|x|^3}\leq
J_{\Lambda}(x)\leq\frac{c_6}{|x|^3}
\quad\text{a.e.} 
\end{equation} 
where
$c_5,c_6>0$.

Let now $x,y \in H_{\geq M}$. Then $x$ and $y$ can be connected by
a path $\gamma$ in
\[
H_{\geq M} \cap \left\{z\in\RR^3:\; |z|\geq \min \{|x|,|y|\}\right\}
\] 
whose length is not greater than $\pi|x-y|$. Together with~\eqref{2f}
this yields
\begin{equation}\label{2h}\left|\Lambda(x)-\Lambda(y)\right|\leq\pi|x-y|
\esssup\limits_{z\in \gamma}\left|D\Lambda(z)\right|
\leq c_4\pi\frac{|x-y|}{\min\{|x|,|y|\}}
\end{equation}

Next we note that there exists a unique point $(v_1,v_2)\in Q$ such
that $h(v_1,v_2)=(0,0,1)$. Then $F(v_1,v_2,x_3)=(0,0,e^{x_3})$ and
hence $f(v_1,v_2,x_3)=(0,0,e^{x_3}-a)$. It follows that if
$r=(r_1,r_2)\in S$, then
\[
\Lambda^r(0,0,e^{x_3}-a)=(v_1+2r_1, v_2+2r_2,x_3)
\] 
for $x_3\in \RR$ and thus
\begin{equation}\label{2j}\Lambda^r(0,0,y)=(v_1+2r_1,v_2+2r_2,\log(y+a))
\end{equation}
for $y\geq M$.

As in~\cite{Schleicher03} we will consider the function
\[
E:[0,\infty)\to[0,\infty),\;E(t)=e^t-1.
\]
We have $E(0)=0$ while $\lim_{k\to\infty}E^k(t)=\infty$ if $t>0$.
Later we will use that if $b>1$, then
\[
\log\left(E^{k+1}(t)+b\right)
=\log\left(\exp(E^k(t))-1+b\right)
=E^k(t)+\log\left(1+\frac{b-1}{\exp(E^k(t))}\right)
\]
so that
\begin{equation}\label{2k}
\log\left(E^{k+1}(t)+b\right)=E^k(t)+R_k(t)
\quad\text{with}\ 0\leq R_k(t)\leq \log b.
\end{equation}
We also note that if $0<t'<t''<\infty$, then
\begin{equation}\label{2l}
\lim_{k\to\infty}\left(E^k(t'')-E^k(t')\right)
=\infty \quad \text{ and }\quad
\lim_{k\to\infty}\frac{E^k(t'')}{E^k(t')}=\infty.
\end{equation}

\section{Construction of the hairs}\label{constructionhairs}
In this section we prove that the fixed point $\xi$ as given in
the theorem exists and that~$J$ is a union of uncountably many
pairwise disjoint hairs. We thus prove the part of the conclusion
of Theorem~\ref{thm1} that is the analogue of Theorem~A.

We first note that $f(H_{\leq M})\subset H_{\leq m}$
by~\eqref{2a1}. 
Moreover,
\[
|f(x)-f(y)|\leq |x-y|
\esssup\limits_{z\in [x,y]}|Df(z)|\leq
\alpha|x-y|
\] 
for $x,y\in H_{\leq m}$ by~\eqref{1d}. Banach's
fixed point theorem now implies that there exists a unique fixed
point $\xi\in H_{\leq m}$ and that $f^n(x)\to \xi$ as $n\to
\infty$ for all $x\in H_{\leq M}$.

If $r_1+r_2$ is odd, then $f(P(r_1,r_2)\times \RR)= H_{< -a}$.
Since $-a<m<M$ we deduce that $f$ maps the closure of
$P(r_1,r_2)\times \RR$ into $H_{\leq M}$ if $r_1+r_2$ is odd.
Thus 
\[
J\subset \bigcup_{r\in S} T(r).
\]
As in the case of exponential maps we associate to each $x\in J$ a
sequence 
\[
\underline{s}(x)=s_0s_1s_2\ldots =(s_k)_{k\geq 0}
\] 
in~$S$, where $s_k=(s_{k,1},s_{k,2})\in S$ is chosen such that $f^k(x)\in
T(s_k)$ for all $k\geq 0$. The sequence $\underline{s}(x)$ is
called the \emph{itinerary} (or \emph{external address}) of~$x$.
We denote the set of all sequences $\underline{s}:\NN\cup\{0\}\to S$ by
$\Sigma$. We say that $\underline{s}=(s_k)_{k\geq 0}\in\Sigma$ is
\emph{admissible} (or \emph{exponentially bounded}) if there exists
$t>0$ such that
\begin{equation}\label{3a}\limsup_{k\to \infty}\frac{|s_k|}{E^k(t)}<\infty.
\end{equation}
Here $|s_k|=|(s_{k,1},s_{k,2})|=\sqrt{s^2_{k,1}+s^2_{k,2}}$ is the
euclidean norm of $|s_k|$.

With these notations we have the following two propositions.

\begin{proposition} \label{prop1}
Let $x\in J$. Then $\underline{s}(x)$ is admissible.
\end{proposition}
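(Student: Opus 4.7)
The plan is to bound $|s_k|$ by the norm of the orbit point $f^k(x)$ and then to bound $|f^k(x)|$ by an iterate of~$E$. First, since $f^k(x)\in T(s_k)=P(s_k)\times(M,\infty)$, the first two coordinates of $f^k(x)$ differ from $2s_k$ by less than~$1$ in each component, hence by less than~$\sqrt 2$ in euclidean norm, so
\[
2|s_k|\le \bigl|\bigl(f_1^k(x),f_2^k(x)\bigr)\bigr|+\sqrt 2\le |f^k(x)|+\sqrt 2.
\]
It thus suffices to show that $|f^k(x)|\le E^k(t)$ for some $t>0$ and all sufficiently large~$k$.

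Writing $y_k=f^k(x)$ and combining the obvious bound $y_{k,3}\le|y_k|$ with~\eqref{2c1} (which in particular gives $|f(y)|\le |F(y)|+a=e^{y_3}+a$) yields the recurrence
\[
|y_{k+1}|\le e^{|y_k|}+a.
\]
This recurrence is almost, but not quite, of the form $u\mapsto E(u)=e^u-1$, so a direct induction does not close. I would instead prove the slightly sharper statement $|y_k|\le E^k(t)-(a+1)$ for all $k\ge 0$, where $t$ is chosen large enough that both $t\ge|y_0|+a+1$ and $e^t\bigl(1-e^{-(a+1)}\bigr)\ge 2a+2$ hold. The base case is immediate. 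The inductive step requires
\[
e^{E^k(t)-(a+1)}+a\le e^{E^k(t)}-(a+2)=E^{k+1}(t)-(a+1),
\]
which rearranges to $e^{E^k(t)}\bigl(1-e^{-(a+1)}\bigr)\ge 2a+2$ and holds because $E^k(t)\ge t$ (since $e^s\ge 1+s$ for $s\ge 0$) and~$t$ was chosen to satisfy this inequality.

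Combining the two bounds gives $|s_k|\le\tfrac12\bigl(E^k(t)-(a+1)+\sqrt 2\bigr)$ for all~$k$, and in particular $\limsup_{k\to\infty}|s_k|/E^k(t)\le\tfrac12<\infty$, so $\underline s(x)$ is admissible in the sense of~\eqref{3a}. The only delicate point in this plan is the mismatch between the natural recurrence coming from~\eqref{2c1}, which has an additive term $+a$, and the clean iterate~$E(u)=e^u-1$; this is handled by building the additive slack $-(a+1)$ into the inductive hypothesis and exploiting that, once $E^k(t)$ is large, any fixed additive constant is absorbed at the next exponentiation.
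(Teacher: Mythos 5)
Your argument is correct. The paper deliberately omits the proof, stating only that it follows by the same reasoning as in the exponential case (Devaney--Krych, Schleicher--Zimmer); the standard argument there is exactly what you do: bound $|s_k|$ by $|f^k(x)|$, derive the recurrence $|f^{k+1}(x)|\le e^{|f^k(x)|}+a$ from the explicit form of $f$, and dominate by iterates of $E$. Your trick of strengthening the inductive hypothesis to $|y_k|\le E^k(t)-(a+1)$ to absorb the additive slack is a clean way to close the induction; all the inequalities check out (the base case uses $t\ge|y_0|+a+1$, the inductive step uses $E^k(t)\ge t$ and the second condition on $t$, and $y_{k,3}>M>0$ guarantees $y_{k,3}\le|y_k|$ throughout). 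Nothing is missing.
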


\begin{proposition} \label{prop2}
Let $\underline{s}\in \Sigma$ be admissible. Then
$\{x\in J:\underline{s}(x)=\underline{s}\}$ is a hair.
\end{proposition}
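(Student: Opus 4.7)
The plan is to construct the hair as an infinite pullback along $\underline{s}$. By admissibility, fix $t_\ast>0$ and $C>0$ with $|s_k|\le C\,E^k(t_\ast)$ for all $k$, and (enlarging $t_\ast$ if needed so that all intermediate third coordinates stay $\ge M$) define, for $t\ge t_\ast$,
\[
\gamma_n(t):=\Lambda^{s_0}\circ\Lambda^{s_1}\circ\cdots\circ\Lambda^{s_{n-1}}(0,0,E^n(t)).
\]
The goal is to show that $\gamma_n(t)\to\gamma(t)$, that $\gamma\colon[t_\ast,\infty)\to\RR^3$ is a continuous injection with $\gamma(t)\to\infty$, that it obeys the functional equation $f\circ\gamma(t)=\gamma_{\sigma\underline{s}}(E(t))$ with $\sigma$ the shift on $\Sigma$, and, after extending to the endpoint and reparameterizing, that its image is the set $\{x\in J:\underline{s}(x)=\underline{s}\}$.

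Convergence is the heart of the argument. Introduce the intermediate points $w_j^{(n)}:=\Lambda^{s_j}\circ\cdots\circ\Lambda^{s_{n-1}}(0,0,E^n(t))$; iterating \eqref{2j} together with \eqref{2k} shows that the third coordinate of $w_j^{(n)}$ differs from $E^j(t)$ by a bounded amount, and admissibility bounds the horizontal displacement by a constant multiple of $E^j(t)$, so $|w_j^{(n)}|$ is of order $E^j(t)$. The innermost step gives
\[
\bigl|\Lambda^{s_n}(0,0,E^{n+1}(t))-(0,0,E^n(t))\bigr|\lesssim |s_n|+\log a\lesssim E^n(t),
\]
and iterating \eqref{2h} through the $n$ remaining pullbacks, each contracting distances by a factor of order $1/E^j(t)$, produces
\[
|\gamma_{n+1}(t)-\gamma_n(t)|\lesssim \frac{(c_4\pi)^n}{\prod_{j=1}^{n-1}E^j(t)},
\]
which is summable in $n$ because the iterated exponentials in the denominator dwarf the geometric numerator. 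Uniform convergence on compact subsets of $[t_\ast,\infty)$ then gives continuity of $\gamma$, and the telescoping bound on the third coordinate shows $\gamma_3(t)=t+O(1)$, so $\gamma(t)\to\infty$ as $t\to\infty$. The functional equation is immediate from the construction.

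Injectivity combines the functional equation with \eqref{2l}: if $\gamma(t)=\gamma(t')$ with $t<t'$, then $\gamma_{\sigma^n\underline{s}}(E^n(t))=\gamma_{\sigma^n\underline{s}}(E^n(t'))$, but since the admissibility constant is invariant under the shift, the estimate $\gamma_{3,\sigma^n\underline{s}}(s)=s+O(1)$ is uniform in $n$, whereas $E^n(t')-E^n(t)\to\infty$. For the identification of the image, one first notes
\[
\{x\in J:\underline{s}(x)=\underline{s}\}=\bigcap_{n\ge 0}\Lambda^{s_0}\circ\cdots\circ\Lambda^{s_{n-1}}(T(s_n)),
\]
which follows from $\Lambda^{s_k}$ being the right inverse of $f|_{T(s_k)}$; each $\gamma(t)$ lies in every set in this intersection (the third-coordinate bound keeps it away from the boundary), and conversely, given $x$ in the intersection, the sequence $f_3^n(x)$ determines a parameter $t(x)$ via iterated logarithms such that the same contraction chain forces $x=\gamma(t(x))$.

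The main technical obstacle will be this last converse together with the endpoint analysis: one must extend $\gamma$ continuously down to the infimum of its admissible parameter range (so that after a translation $t\mapsto t-t_\ast$ the hair has the required domain $[0,\infty)$) and verify that no point of the intersection lies outside the resulting parameterization. The contraction estimates make it intuitively clear that each ``fiber'' of the intersection over a fixed parameter collapses to a single point, but turning this into a clean endpoint argument, in the spirit of Schleicher--Zimmer, will require more delicate bookkeeping than the existence part of the construction.
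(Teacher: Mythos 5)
Your overall structure — build the curve by infinite pullback along the itinerary, prove convergence via the contraction estimates, show injectivity from the third-coordinate growth, and identify the image as the nested intersection of pullbacks of $T(s_k)$ — matches the paper's approach. The convergence estimate you use is a workable variant: you pull back $n$ times through~\eqref{2h}, accumulating a factor $\prod 1/E^j(t)$, whereas the paper applies~\eqref{2h} only once (at the innermost level) and then uses the uniform contraction $|\Lambda(x)-\Lambda(y)|\le\alpha|x-y|$ from~\eqref{2b} for the remaining $k-1$ steps, yielding the cleaner geometric bound $\alpha^{k-1}c_8$. Either works, though the paper's choice is what makes the rest go through smoothly (see below). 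A small technical point: your base point $(0,0,E^n(t))$ need not lie in $H_{\geq M}$ for small $n$; the paper uses $(0,0,E^{k+1}(t)+M)$ precisely to avoid this.

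The genuine gap is the endpoint, and it is larger than your closing paragraph suggests. You fix $t_\ast>t_{\underline s}$ (with a constant $C$ depending on $t_\ast$) and prove convergence for $t\ge t_\ast$; but the hair's domain must be $[t_{\underline s},\infty)$, and as $t_\ast\downarrow t_{\underline s}$ your constant $C$ can blow up, so this does not directly yield convergence on $(t_{\underline s},\infty)$, let alone at $t_{\underline s}$. The paper handles the open interval by working with the auxiliary parameters $t_k$ defined by $2|s_k|=E^k(t_k)$ and $\tau_k=\sup_{j\ge k}t_j$, obtaining the estimate $|g_k(t)-g_{k-1}(t)|\le\alpha^{k-1}c_8$ for $t\ge t_k$ and thus locally uniform convergence on $(t_{\underline s},\infty)$; and then Lemma~\ref{lemma3} is a separate, delicate argument — extracting a subsequence $(g_{k_j})$ along indices where $\tau_k$ jumps and controlling $|g_{k_j+1}(t)-g_{k_j}(t)|$ on $[0,\tau_{k_{j+1}}]$ — to get uniform convergence all the way down to the endpoint. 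Nothing in your sketch replaces this; you acknowledge it as an obstacle but do not indicate how to overcome it.

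A second, related gap: your injectivity argument only covers $t,t'$ in the open interval. The paper's injectivity lemma (Lemma~\ref{lemma2}) must also separate the endpoint $g(t_{\underline s})$ from $g(w)$ for $w>t_{\underline s}$; this requires the auxiliary Lemma~\ref{lemma2a} (a third-coordinate gap of size $H$, once achieved, persists under iteration) followed by a limiting argument $u\downarrow t_{\underline s}$. Similarly, in identifying the image you would need the analogue of Lemma~\ref{lemma7}, which shows $x_{k,3}\ge E^k(t_{\underline s(x)})$ and hence guarantees that the parameter $t(x)$ you recover via iterated logarithms actually satisfies $t(x)\ge t_{\underline s}$ — i.e.\ that no point of $\{x\in J:\underline s(x)=\underline s\}$ falls below the constructed parameterization. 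These are not bookkeeping details; they are where the real work of the proposition lies.
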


Since $\{x\in J:\underline{s}(x)=\underline{s}\}\cap \{x\in
J:\underline{s}(x)=\underline{s}'\}=\emptyset$ for
$\underline{s},\underline{s}'\in
\Sigma$, $\underline{s}\neq\underline{s}'$, it follows from these two
propositions that $J$ is a union of pairwise disjoint hairs.
Moreover, the set of admissible sequences -- and thus the set of
hairs -- is easily seen to be uncountable.

The proof of Proposition~\ref{prop1} is straightforward, using exactly the
same reasoning as in the case of exponential maps. Therefore we
omit it here.

For the proof of Proposition~\ref{prop2} we suitably modify the arguments
of Schleicher and Zimmer~\cite{Schleicher03}. We fix an admissible
sequence $\underline{s}$ and
denote by
$t_{\underline{s}}$ be the infimum of all $t>0$ for which~\eqref{3a}
holds. It follows from~\eqref{2l} that
\begin{equation}\label{3j}\limsup_{k\to\infty}\frac{|s_k|}{E^k(t)}=\infty \quad
\ \text{for}\ 0<t<t_{\underline{s}}
\end{equation}
and
\begin{equation}\label{3k}\lim_{k\to\infty}\frac{|s_k|}{E^k(t)}=0\quad
\ \text{for}\ t>t_{\underline{s}}.
\end{equation}
Choosing $t_k\in [0,\infty)$ such that $2|s_k|=E^k(t_k)$ 
and putting $\tau_k:=\sup_{j\geq k} t_j$ we have
\[
t_{\underline{s}} =\limsup_{k\to\infty} t_k
=\lim_{k\to\infty} \tau_k.
\]
We use the abbreviation
\[
L_k:=\Lambda^{s_k}=\Lambda^{(s_{k,1},s_{k,2})}.
\]
For $k\geq 0$ we define
\[
g_k:[0 ,\infty)\to H_{\geq M},\ 
g_k(t)=(L_0\circ L_1\circ \ldots\circ L_k)(0,0,E^{k+1}(t)+M).
\]

\begin{lemma} \label{lemma1}
The sequence $(g_k)$ converges locally uniformly
on $(t_{\underline{s}},\infty)$.
\end{lemma}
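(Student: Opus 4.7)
My plan is to establish that $(g_k)$ is uniformly Cauchy on every compact interval $I=[t_1,t_2]\subset(t_{\underline{s}},\infty)$, which yields local uniform convergence. Set
\[
p_k:=L_{k+1}(0,0,E^{k+2}(t)+M)\quad\text{and}\quad q_k:=(0,0,E^{k+1}(t)+M),
\]
so that $g_{k+1}(t)=(L_0\circ\cdots\circ L_k)(p_k)$ and $g_k(t)=(L_0\circ\cdots\circ L_k)(q_k)$; thus the task reduces to controlling the effect of the composition $L_0\circ\cdots\circ L_k$ on the pair $(p_k,q_k)$. Using~\eqref{2j} and~\eqref{2k} with $b=M+a$, I can write
\[
p_k=(v_1+2s_{k+1,1},\,v_2+2s_{k+1,2},\,E^{k+1}(t)+R_{k+1}(t))
\]
with $R_{k+1}(t)$ bounded, so that $|p_k-q_k|\leq 2|s_{k+1}|+C_1$ for some constant $C_1$ independent of $k$ and $t$.

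The key idea is to spend a single application of the sharp inverse-linear estimate~\eqref{2h} on the outermost map $L_k$ and to absorb the remaining $k$ compositions via the uniform contraction~\eqref{2b}. For $k\geq k_0(I)$ large enough, both $p_k$ and $q_k$ lie in $H_{\geq M}$ and their third coordinates exceed $E^{k+1}(t_1)/2$, so $\min\{|p_k|,|q_k|\}\geq E^{k+1}(t_1)/2$. By admissibility, specifically~\eqref{3k} applied at $t_1>t_{\underline{s}}$, the quotient $|s_{k+1}|/E^{k+1}(t_1)$ is bounded in~$k$. Combining these facts with~\eqref{2h} gives
\[
|L_k(p_k)-L_k(q_k)|\leq c_4\pi\,\frac{2|s_{k+1}|+C_1}{E^{k+1}(t_1)/2}\leq C_2
\]
for a constant $C_2$ independent of $k\geq k_0$ and $t\in I$. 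Iterating the contraction~\eqref{2b} through the remaining $k$ maps $L_{k-1},\ldots,L_0$ then yields
\[
|g_{k+1}(t)-g_k(t)|\leq\alpha^k\,|L_k(p_k)-L_k(q_k)|\leq\alpha^k C_2,
\]
which is summable in~$k$. Hence the telescoping series $g_k=g_{k_0}+\sum_{j=k_0}^{k-1}(g_{j+1}-g_j)$ converges uniformly on~$I$.

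The principal technical nuisance is the norm bookkeeping needed to invoke~\eqref{2h} and~\eqref{2k}: one must verify that $p_k$ and $q_k$ lie in $H_{\geq M}$, that their norms are comparable and at least of order $E^{k+1}(t_1)$, and (if $M+a\leq 1$) that the log expansion in~\eqref{2k} still produces a bounded remainder. All of these become automatic once $k$ is beyond a threshold depending only on~$I$, since $E^{k+1}(t_1)$ eventually dominates every fixed constant; this is what allows the single application of~\eqref{2h} to beat the factor $|s_{k+1}|$ coming from admissibility, after which the pure contraction~\eqref{2b} does all the remaining work.
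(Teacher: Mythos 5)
Your argument is correct and follows essentially the same route as the paper: rewrite $g_{k+1}(t)-g_k(t)$ as the image of a pair of points under $L_0\circ\cdots\circ L_k$, spend the sharp estimate~\eqref{2h} on the outermost inverse branch $L_k$ to get a bound that is uniform in $k$ and $t$, and then absorb the remaining compositions with the contraction~\eqref{2b} to obtain a geometric series. The only organizational difference is how the uniformity in $t$ is secured. You fix a compact interval $I=[t_1,t_2]\subset(t_{\underline s},\infty)$ up front and invoke~\eqref{3k} at $t_1$ to control $|s_{k+1}|/E^{k+1}(t_1)$, getting a bound valid for $k\geq k_0(I)$. The paper instead introduces the thresholds $t_k$ (defined by $2|s_k|=E^k(t_k)$) and $\tau_k=\sup_{j\geq k}t_j$, notes $E^k(t)\geq 2|s_k|$ for $t\geq t_k$, and obtains the uniform bound $|g_k(t)-g_{k-1}(t)|\leq\alpha^{k-1}c_8$ for $t\geq\tau_k$; since $\tau_k\downarrow t_{\underline s}$, this also gives local uniform convergence. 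Your version is slightly leaner if the only goal is the lemma as stated; the paper's version is slightly more precise and that precision (the explicit estimates in terms of $\tau_k$, inequalities~\eqref{3f}--\eqref{3f3}) is exactly what gets reused in Lemma~\ref{lemma3} to handle convergence on the closed half-line $[t_{\underline s},\infty)$. Your closing remark about $M+a\leq 1$ is a genuine, if minor, edge case that the paper glosses over: if $M+a\leq 1$ the remainder in~\eqref{2k} is nonpositive rather than nonnegative, but its absolute value is still at most $|\log(M+a)|$, so all the estimates survive with adjusted constants.
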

\begin{proof}It follows from~\eqref{2j} and~\eqref{2k} that
\begin{equation}\label{3b}
\begin{aligned}
L_k(0,0,E^{k+1}(t)+M)
&=
(v_1+2s_{k,1},v_2+2s_{k,2},\log(E^{k+1}(t)+M+a))\\
&=
(v_1+2s_{k,1},v_2+2s_{k,2},E^k(t)+R_k(t))
\end{aligned}
\end{equation}
where $0\leq R_k(t)\leq \log(M+a)$. Since $|v_1|\leq 1$ and
$|v_2|\leq 1$ this yields
\[
\left|L_k(0,0,E^{k+1}(t)+M)\right|\geq
\left|(2s_{k,1},2s_{k,2},E^k(t))\right|-c_7
\]
with
$c_7:=2+\log(M+a)$.
Since $L_k(H_{\geq M})\subset H_{\geq M}$ this implies that
\begin{equation}\label{3d2}\left|L_k(0,0,E^{k+1}(t)+M)\right|
\geq  \max\{2|s_k|-c_7,E^k(t)-c_7,M\}
\end{equation}
Also, it follows from~\eqref{3b} that
\begin{equation}\label{3e}
\left|L_k(0,0,E^{k+1}(t)+M)-(0,0,E^k(t)+M)\right|
\leq 2|s_k|+c_7+M .
\end{equation}
Applying~\eqref{2h} with $x=L_k(0,0,E^{k+1}(t)+M)$,
$y=(0,0,E^k(t)+M)$ and $\Lambda=L_{k-1}$ 
and noting that $E^k(t)\geq 2|s_k|$ for $t\geq t_k$ we deduce
from~\eqref{3d2} and~\eqref{3e} that if $t\geq t_k$, then
\[
\left|L_{k-1}(L_k(0,0,E^{k+1}(t)+M))-L_{k-1}(0,0,E^k(t)+M)\right|
\leq
c_4 \pi\frac{E^k(t)+c_7+M}{\max\{E^k(t)-c_7, M\}}
\leq c_8
\]
for some constant~$c_8$.
Now~\eqref{2b} gives
\[
\left|L_{k-2}(L_{k-1}(L_k(0,0,E^{k+1}(t)+M)))-L_{k-2}(L_{k-1}(0,0,E^k(t)))\right|\leq
\alpha c_8
\] 
and induction yields
\begin{equation}\label{3f}
\left|g_k(t)-g_{k-1}(t)\right|\leq \alpha^{k-1}c_8
\quad \text{for}\ t\geq t_k.
\end{equation}
In particular, $\left|g_k(t)-g_{k-1}(t)\right|\leq \alpha^{k-1}c_8$
if $k\geq j$ and $t\geq \tau_j$.
As $\lim_{j\to\infty}\tau_j= t_{\underline{s}}$
this implies that $(g_k)$ converges locally uniformly
on $\left(t_{\underline{s}},\infty\right)$.
\end{proof}

Define $g:\left(t_{\underline{s}},\infty\right)\to H_{\geq M}$ by
$g(t)=\lim_{k\to\infty} g_k(t)$.
Then $g$ is continuous and~\eqref{3f} yields
\begin{equation}\label{3f1}
\left|g(t)-g_{k-1}(t)\right|
\leq
\frac{\alpha^{k-1}c_8}{1-\alpha} \quad \text{for}\ t\geq \tau_k.
\end{equation}
We also have
\begin{equation}\label{3f3}
\left|g_l(t)-g_{k-1}(t)\right|
\leq
\frac{\alpha^{k-1}c_8}{1-\alpha} \quad \text{for}\ t\geq \tau_k
\ \text{and}\ l\geq k.
\end{equation}
In particular,~\eqref{3f1} yields
\begin{equation}\label{3f2}
|g(t)-g_0(t)|\leq \frac{c_8}{1-\alpha}
\quad \text{for}\ t\geq \tau_1.
\end{equation}
Since
\[
g_0(t)=L_0(0,0,E(t)+M)=(v_1+2s_{0,1},v_2+2s_{0,2},\log
(E(t)+M+a))
\] 
by~\eqref{2j} we deduce from~\eqref{2k} that
\[
\left|g_0(t)-(2s_{0,1},2s_{0,2},t)\right|\leq 2+\log (M+a)=c_7.
\]
Combining this with~\eqref{3f2} we obtain
\begin{equation}\label{3g}
\left|g(t)-(2s_{0,1},2s_{0,2},t)\right|\leq c_9
\quad \text{for}\ t\geq \tau_1,
\end{equation}
where $c_9:=c_7+c_8/(1-\alpha)$.

\begin{lemma}\label{lemma3}
The sequence $(g_k)$ has a subsequence which converges uniformly
on $[t_{\underline{s}},\infty)$ and thus
$g$ extends to a continuous map
$g:[t_{\underline{s}},\infty)\to H_{\geq M}$.
\end{lemma}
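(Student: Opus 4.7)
The plan is to apply the Arzel\`a--Ascoli theorem to the restricted family $\{g_k|_{[t_{\underline{s}},\tau_1]}\}$ on the compact interval $[t_{\underline{s}},\tau_1]$, then combine the resulting uniformly convergent subsequence with the uniform convergence of the whole sequence $(g_k)$ on $[\tau_1,\infty)$. The latter follows from~\eqref{3f1}: since $\tau_k\leq\tau_1$ for every $k\geq 1$, that estimate applies uniformly in $t\geq\tau_1$, yielding $|g_{k-1}(t)-g(t)|\leq\alpha^{k-1}c_8/(1-\alpha)$ there. A subsequence that moreover converges uniformly on $[t_{\underline{s}},\tau_1]$ will then converge uniformly on all of $[t_{\underline{s}},\infty)$, and its limit is a continuous extension of $g$.

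For uniform boundedness of $\{g_k\}$ on $[t_{\underline{s}},\tau_1]$, observe that the inclusion $g_k(t)\in T(s_0)$ already bounds the first two coordinates. For the third, introduce
\[
w_j:=L_j\circ L_{j+1}\circ\cdots\circ L_k(0,0,E^{k+1}(t)+M),
\]
so that $g_k(t)=L_0(w_1)$. The identity $\Lambda_3(x)=\log|x+(0,0,a)|$ (from $F(\Lambda(x))=x$ and $|h|=1$) combined with the coordinate bound $|(w_{j+1})_i|\leq 2|s_{j+1}|+1$ for $i=1,2$ (because $w_{j+1}\in T(s_{j+1})$) gives
\[
(w_j)_3\leq\log\bigl(2|s_{j+1}|+1+(w_{j+1})_3+a\bigr).
\]
Starting from $(w_k)_3\leq E^k(t)+\log(M+a)$ (which is~\eqref{2k}) and using the admissibility condition in the form $|s_j|\leq E^j(t+\epsilon)$ for fixed small $\epsilon>0$ and $j$ large (via~\eqref{3k}), a downward induction on $j$ yields $(w_j)_3\leq E^j(t+\epsilon)+C$ where $C$ is independent of $k$ and of $t\in[t_{\underline{s}},\tau_1]$. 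The additive constant does not accumulate, since $\log(3E^{j+1}(t+\epsilon)+O(1))\leq E^j(t+\epsilon)+\log 3+O(E^{j+1}(t+\epsilon)^{-1})$ gives a contractive equation for $C$. Applying $L_0$ to $w_1$ then yields the required uniform bound on $(g_k(t))_3$.

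For equicontinuity I would estimate $|g_k'(t)|$ uniformly in $k$ by tracking the tangent vector $(0,0,(E^{k+1})'(t))$ through the chain rule. The first application $DL_k(0,0,y)\cdot(0,0,1)=(0,0,1/(y+a))$, evaluated at $y=E^{k+1}(t)+M$, yields a vertical vector of size approximately $(E^k)'(t)$ by the identity $(E^{k+1})'(t)=e^{E^k(t)}(E^k)'(t)$. Each subsequent factor $DL_j(p_{j+1})$ has operator norm at most $c_4/|p_{j+1}|\leq c_4/E^{j+1}(t)$ by~\eqref{2f} and Step~1, and combining with $(E^{j+1})'(t)/(E^{j+1}(t)+1)=(E^j)'(t)$ produces a telescoping product that collapses to a bounded quantity at $j=0$. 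The main obstacle is exactly this step: a naive composition of operator-norm bounds yields a Lipschitz constant growing like $c_4^{k+1}$, and one must exploit the particular alignment of the iterated tangent vector with favorable directions of each successive $DL_j$ to secure a uniform bound. Once this is in place, Arzel\`a--Ascoli furnishes the required uniformly convergent subsequence on $[t_{\underline{s}},\tau_1]$.
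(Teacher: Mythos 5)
Your overall strategy — apply Arzel\`a--Ascoli on the compact interval $[t_{\underline{s}},\tau_1]$ and glue with the uniform convergence of the whole sequence on $[\tau_1,\infty)$ (which does indeed follow from~\eqref{3f1}) — is a reasonable alternative to what the paper does, and the uniform-boundedness half is sound in spirit: the downward induction on $(w_j)_3$, using $\Lambda_3(x)=\log|x+(0,0,a)|$ together with $|s_j|\leq E^j(t_{\underline{s}}+\epsilon)$ for $j$ large (from~\eqref{3k}), does produce a bound $(g_k(t))_3\leq t+C$ uniform in $k$ and $t\in[t_{\underline{s}},\tau_1]$. However, the equicontinuity step, which you yourself flag as ``the main obstacle,'' is a genuine gap, and the fix you sketch does not work. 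The point is that the favorable alignment is destroyed after the very first factor: $DL_k$ is applied at the axial point $(0,0,E^{k+1}(t)+M)$, where $h(v_1,v_2)=(0,0,1)$ and the image of $(0,0,1)$ under $D\Lambda$ is genuinely vertical; but $DL_{k-1}$ is then applied at $w_k$, whose first two coordinates are $v_i+2s_{k,i}$, generically far from the axis. There $D\Lambda(w_k)(0,0,1)=DF(\Lambda(w_k))^{-1}(0,0,1)$, whose vertical component is $h_3\,e^{-\Lambda_3(w_k)}$ with $h_3\neq 1$, and whose horizontal part is of comparable size $\sqrt{1-h_3^2}\,e^{-\Lambda_3(w_k)}$. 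So from the second factor onward the tangent vector has no distinguished direction, the ``telescoping'' you envisage does not occur, and one is stuck with the operator-norm bound~\eqref{2f}, which gives precisely the $c_4^{k}$ accumulation you warn about (and $c_4>1$ in general). One can check that applying the naive chain of~\eqref{2h} and~\eqref{2b} to $|g_k(t)-g_k(t')|$ for $t_{\underline{s}}<t<t'$ likewise yields a bound of order $\alpha^{k-1}E^k(t')/E^k(t)$, which diverges; so the derivative route is not just unfinished but likely the wrong tool.

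The paper avoids the equicontinuity issue entirely. It does not try to control the whole family; instead it picks a specific subsequence $(k_j)$ determined by where $\tau_k=\sup_{i\geq k}t_i$ is attained (so that $\tau_{k_j+1}=\tau_{k_{j+1}}$), and proves that this subsequence is uniformly Cauchy on all of $[0,\infty)$. The key new ingredient, which your sketch lacks, is an oscillation bound for $g_k$ on $[0,t_k]$: using the explicit formula~\eqref{3b}, one sees that $L_k(0,0,E^{k+1}(t_k)+M)$ and $L_k(0,0,E^{k+1}(t)+M)$ differ by a vertical vector of length at most $E^k(t_k)+2\log(M+a)=2|s_k|+O(1)$; one further application of~\eqref{2h} together with the lower bound $|w_k|\geq 2|s_k|-c_7$ from~\eqref{3d2} crushes this to a constant $c_{10}$, and then the contraction~\eqref{2b} gives $|g_k(t_k)-g_k(t)|\leq\alpha^{k-1}c_{10}$ for $0\leq t\leq t_k$. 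Combined with~\eqref{3f3} and a four-term triangle inequality this produces the uniform Cauchy estimate for $(g_{k_j})$. If you want to complete your Arzel\`a--Ascoli route, you would essentially need this same oscillation bound on $[0,t_k]$ (plus something on $[t_k,\tau_1]$) to obtain equicontinuity of the full family $\{g_k\}$, at which point the selection by Arzel\`a--Ascoli becomes redundant — so the paper's construction of an explicit Cauchy subsequence is both the missing ingredient and the more economical argument.
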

\begin{proof}
The conclusion follows immediately from~\eqref{3f}
if $t_k\leq t_{\underline{s}}$ for all large $k$ and
thus $\tau_k=t_{\underline{s}}$ for large~$k$, since then
$(g_k)$ converges uniformly on $[ t_{\underline{s}},\infty)$.
We may thus assume that there exist arbitrarily large $k$
for which  $t_k>t_{\underline{s}}$. Then
$\tau_k>t_{\underline{s}}$ for all $k$ and
there exists
an increasing sequence $(k_j)$ such that
$\tau_k=t_{k_j}$ for $k_{j-1}<k\leq k_j$.
Thus $\tau_{k_j+1}=\tau_{k_{j+1}}$ and~\eqref{3f3} implies that
\begin{equation}\label{3f8}
\left|
g_{k_{j+1}}\left(\tau_{k_{j+1}}\right)-g_{k_{j}}\left(\tau_{k_{j+1}}\right)
\right|
\leq \frac{\alpha^{k_{j}}c_8}{1-\alpha}
\leq \frac{\alpha^{j}c_8}{1-\alpha}.
\end{equation}
It follows from~\eqref{3b}
that if $0\leq t\leq t_k$, then
\begin{eqnarray*}
  &  & \left|L_k(0,0,E^{k+1}(t_k)+M)-L_k(0,0,E^{k+1}(t)+M)\right| \\
  & = & \left|\left(0,0, E^k(t_k)-E^k(t)+R_k(t_k)-R_k(t)\right)\right| \\
  & \leq & E_k(t_k)+2\log(M+a) \\
  & = & 2|s_k|+2\log(M+a).
\end{eqnarray*}
Combining 
this with~\eqref{2h} and~\eqref{3d2} we find that
\begin{eqnarray*}
& &
\left|L_{k-1}(L_k(0,0,E^{k+1}(t_k)+M))-L_{k-1}(L_k(0,0,E^{k+1}(t)+M))\right|\\
& \leq &
c_4\pi \frac{2|s_k|+2\log(M+a)}{\max\{2|s_k|-c_7,M\}}
\\
&\leq &
c_{10}
\quad\text{for}\ 0 \leq t\leq t_k
\end{eqnarray*}
with a constant $c_{10}$ 
and thus~\eqref{2b}
yields
\[
\left|g_k(t_k)-g_k(t)\right|\leq \alpha^{k-1}c_{10}
\quad\text{for}\ 0 \leq t\leq t_k.
\]
Hence
\[
\left|
g_{k_{j+1}}(\tau_{k_{j+1}})-g_{k_{j+1}}(t) \right|
\leq \alpha^{k_{j+1}-1}c_{10}
\leq \alpha^{j}c_{10}
\quad\text{for}\ 0 \leq t\leq \tau_{k_{j+1}}
\]
and, since $\tau_{k_{j+1}}\leq\tau_{k_{j}}$, also
\[
\left|
g_{k_{j}}(\tau_{k_{j}})-g_{k_{j}}(t) \right|
\leq \alpha^{j-1}c_{10}
\quad\text{for}\ 0 \leq t\leq \tau_{k_{j+1}}.
\]
In particular,
\[
\left|
g_{k_{j}}(\tau_{k_{j}})-g_{k_{j}}(\tau_{k_{j+1}}) \right|
\leq \alpha^{j-1}c_{10}.
\]
Combination of the last three inequalities with~\eqref{3f8} yields
\begin{eqnarray*}
\left|g_{k_{j+1}}(t) - g_{k_{j}}(t)\right|
&\leq&
\left|g_{k_{j+1}}(t)-g_{k_{j+1}}(\tau_{k_{j+1}})\right|
+\left|g_{k_{j+1}}(\tau_{k_{j+1}})-g_{k_{j}}(\tau_{k_{j+1}})\right|\\
& &
+\left|g_{k_{j}}(\tau_{k_{j+1}})-g_{k_{j}}(\tau_{k_{j}})\right|
+\left|g_{k_{j}}(\tau_{k_{j}})- g_{k_{j}}(t)\right|\\
&\leq&
\left( 3c_{10}+\frac{\alpha}{1-\alpha}c_8\right) \alpha^{j-1}
\quad\text{for}\ 0 \leq t\leq \tau_{k_{j+1}}.
\end{eqnarray*}
On the other hand,~\eqref{3f3} implies that
\[
\left|g_{k_{j+1}}(t) -g_{k_{j}}(t)\right|
\leq\alpha^{k_{j}-1} \frac{c_8}{1-\alpha}\leq \alpha^{j-1}
\frac{c_8}{1-\alpha}
\quad\text{for}\ t\geq \tau_{k_{j}+1}= \tau_{k_{j+1}}.
\]
The last two inequalities show that $\left(g_{k_{j}}\right)$
converges uniformly on $[0,\infty)$ and thus in particular
on $[t_{\underline{s}},\infty)$.
\end{proof}

We note that $g_k$ and $g$ depend on~$\underline{s}$. Using the
self-explanatory notation $g_{\underline{s},k}$ and
$g_{\underline{s}}$ we find that
\[
f(g_{\underline{s},k}(t))=g_{\sigma(\underline{s}),k-1}(E(t))
\]
where $\sigma:\Sigma\to
\Sigma$, $\sigma(s_0s_1s_2\ldots)=s_1s_2s_3\ldots$, is the shift map.
Thus
$f(g_{\underline{s}}(t)) =g_{\sigma(\underline{s})}(E(t))$
and
\begin{equation}\label{3i}f^k(g_{\underline{s}}(t))
=g_{\sigma^k(\underline{s})}(E^k(t))
\end{equation}
for $t\geq 0$ and $k\in \NN$.
Similarly, the sequences $(t_k)$ and $(\tau_k)$ depend on~$\underline{s}$.
Using the notation $t_{\underline{s},k}$ and $\tau_{\underline{s},k}$
a more precise formulation of~\eqref{3g} would thus be
\begin{equation}
\label{3g1}
\left|g_{\underline{s}}(t)-(2s_{0,1},2s_{0,2},t)\right|\leq c_9
\quad \text{for}\ t\geq \tau_{\underline{s},1}.
\end{equation}
We also note that
\[
E^{j+k}(t_{\underline{s},j+k})
=2|s_{j+k}|
=E^{j} (t_{\sigma^k(\underline{s}),j})
\]
and hence
\[
E^{k}(t_{\underline{s},j+k})
=t_{\sigma^k(\underline{s}),j}
\quad \text{and} \quad
E^{k}(\tau_{\underline{s},j+k})
=\tau_{\sigma^k(\underline{s}),j}
\]
for $j,k\geq 0$.

For  $x=x_0\in J$ and $k\geq 0$
we put $x_k=(x_{k,1}, x_{k,2}, x_{k,3}):=f^k(x)$.
Similarly we write
$y_k=(y_{k,1}, y_{k,2}, y_{k,3}):=f^k(y)$.

\begin{lemma}\label{lemma2a}
There exists a positive constant~$H$ such that if $x,y\in J$ with
$\underline{s}(x)=\underline{s}(y)$ and if
$y_{k,3}\geq  x_{k,3} +H$ for some $k\in\NN$, then
$y_{j,3}\geq  x_{j,3} +H$ for all $j>k$.
\end{lemma}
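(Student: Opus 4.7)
My plan is to establish the one-step claim: if $y_{k,3}\ge x_{k,3}+H$, then $y_{k+1,3}\ge x_{k+1,3}+H$; the full statement would then follow by induction on $j-k$.

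To do this, I would set $A:=e^{x_{k,3}}$, $B:=e^{y_{k,3}}$, $D:=y_{k,3}-x_{k,3}\ge H$, so that $B=e^{D}A$. By~\eqref{2c1}, $|F(x_k)|=A$ and $|F(y_k)|=B$. Writing $F=(F_1,F_2,F_3)$, the relation $f=F-(0,0,a)$ gives $F_3(x_k)=x_{k+1,3}+a$ and $(F_1(x_k),F_2(x_k))=(x_{k+1,1},x_{k+1,2})$, with the analogous identities for $y$. Since $x,y\in J$ share their itinerary, both horizontal points $(x_{k+1,1},x_{k+1,2})$ and $(y_{k+1,1},y_{k+1,2})$ lie in the common square $P(s_{k+1})$ of side~$2$. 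Letting $R(z):=\sqrt{F_1(z)^{2}+F_2(z)^{2}}$, I would deduce
\[
|R(x_k)-R(y_k)|\le 2\sqrt{2},\qquad R(x_k)+R(y_k)\le A+B\le 2B.
\]

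Applying $|F(z)|^{2}=R(z)^{2}+F_3(z)^{2}$ at $z=x_k$ and $z=y_k$ and subtracting would then yield
\[
(y_{k+1,3}+a)^{2}-(x_{k+1,3}+a)^{2}=(B^{2}-A^{2})-\bigl(R(y_k)^{2}-R(x_k)^{2}\bigr)\ge B^{2}-A^{2}-4\sqrt{2}\,B.
\]
The sum $(x_{k+1,3}+a)+(y_{k+1,3}+a)$ is positive and at most $A+B$, since each summand is positive (using $M+a>0$, a consequence of~\eqref{1f}) and bounded by the corresponding norm $|F|$. Dividing through and invoking $B-A=A(e^{D}-1)\ge A(e^{H}-1)$ gives
\[
y_{k+1,3}-x_{k+1,3}\ge (B-A)-\frac{4\sqrt{2}\,B}{A+B}\ge A(e^{H}-1)-4\sqrt{2}.
\]
Since $A>e^{M}$ is bounded below by a positive constant, I would fix $H=H(M)$ once and for all so large that $e^{M}(e^{H}-1)-4\sqrt{2}\ge H$, which closes the one-step claim.

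The main obstacle will be verifying that $y_{k+1,3}-x_{k+1,3}$ is actually \emph{positive} and not merely large in absolute value: the Pythagorean identity alone controls only the difference of squares, so one must argue that the exponential gain $B^{2}-A^{2}=A^{2}(e^{2D}-1)$ strictly dominates the horizontal correction $4\sqrt{2}\,B$ before the division step is legitimate. Choosing $H$ large enough uniformly in $x,y$ secures this and simultaneously makes the final bound at least $H$; iteration of the one-step claim then delivers the lemma.
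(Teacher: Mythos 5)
Your proof is correct, and its core idea matches the paper's: exploit the exact relation $|F(z)|=e^{z_3}$ together with the fact that the shared itinerary pins the horizontal parts of $f^{k+1}(x)$ and $f^{k+1}(y)$ into the same $2\times 2$ square $P(s_{k+1})$, so the exponential separation of $|f^{k+1}(y)|$ from $|f^{k+1}(x)|$ must be absorbed by the third coordinate; one then closes by induction on the one-step claim. The algebraic route differs slightly: the paper estimates $y_{k+1,3}\geq |y_{k+1}|-|(y_{k+1,1},y_{k+1,2})|$ via the triangle inequality and then splits into the two cases $|x_{k+1}|\geq 2a$ and $|x_{k+1}|<2a$, eventually obtaining the (slightly stronger) bound $y_{k+1,3}>|x_{k+1}|+H$; you instead subtract the exact Pythagorean relations $R(x_k)^2+(x_{k+1,3}+a)^2=A^2$ and $R(y_k)^2+(y_{k+1,3}+a)^2=B^2$ and factor the resulting difference of squares. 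Your version avoids the case split at the small price of the sign-verification you correctly flag — one must check $B^2-A^2>4\sqrt{2}B$ so that $(y_{k+1,3}+a)^2>(x_{k+1,3}+a)^2$ forces $y_{k+1,3}>x_{k+1,3}$ before dividing — and your uniform choice of $H$ (using $A\geq e^M$, $D\geq H$) indeed handles this. This is a sound, complete variant of essentially the same argument.
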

\begin{proof}
Since $\underline{s}(x)=\underline{s}(y)$ we have
$\left|\left( y_{k,1}, y_{k,2}\right)-
\left( x_{k,1}, x_{k,2}\right)\right|\leq 4$.
Thus
\[
y_{k+1,3}
\geq 
\left|y_{k+1}\right|-\left|\left( y_{k,1}, y_{k,2}\right)\right|
\geq 
e^{y_{k,3}}-a -\left|\left( x_{k,1}, x_{k,2}\right)\right|-4
\geq 
e^He^{x_{k,3}}-a-\left|x_{k+1}\right|-4
\]
by~\eqref{2c1}.
If $\left|x_{k+1}\right|\geq 2a$, then $\exp(x_{k,3})\geq \left|x_{k+1}\right|-a\geq \frac12
\left|x_{k+1}\right|$ by~\eqref{2c1} and thus 
\[
y_{k+1,3}\geq \left(\tfrac12 e^H-1\right) \left|x_{k+1}\right|-a-4.
\]
If $\left|x_{k+1}\right|< 2a$, then 
\[
y_{k+1,3}\geq e^H e^M-3a-4\geq e^H e^M\frac{\left|x_{k+1}\right|}{2a} -3a-4.
\]
In both cases we 
we obtain $y_{k+1,3}>\left|x_{k+1}\right|+H\geq x_{k+1,3}+H$ if $H$ is
chosen sufficiently large. The conclusion follows by induction.
\end{proof}

\begin{lemma}\label{lemma2}
$g$ is injective on  $[t_{\underline{s}},\infty)$.
\end{lemma}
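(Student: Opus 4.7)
The plan is to assume, for contradiction, that $g(t')=g(t'')$ for some $t_{\underline{s}}\le t'<t''$ and use the dynamical self-similarity~\eqref{3i} together with the asymptotic estimate~\eqref{3g1} to rule this out. Applying~\eqref{3i} to the supposed equality gives, for every $k\ge 0$,
\[
g_{\sigma^k(\underline{s})}(E^k(t'))=g_{\sigma^k(\underline{s})}(E^k(t'')),
\]
so in particular the third coordinates agree. The admissibility of $\sigma^k(\underline{s})$ (with admissibility parameter $E^k(t_{\underline{s}})$) and the identity $\tau_{\sigma^k(\underline{s}),1}=E^k(\tau_{\underline{s},k+1})$ make~\eqref{3g1} the natural tool for comparing these third coordinates.

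I first handle the interior case $t'>t_{\underline{s}}$. Since $\tau_{\underline{s},k+1}\to t_{\underline{s}}$, for all sufficiently large $k$ one has $t',t''\ge\tau_{\underline{s},k+1}$, so $E^k(t'),E^k(t'')\ge\tau_{\sigma^k(\underline{s}),1}$. Applying~\eqref{3g1} to the shifted sequence $\sigma^k(\underline{s})$ then gives
\[
\bigl|\bigl(g_{\sigma^k(\underline{s})}(E^k(t))\bigr)_3-E^k(t)\bigr|\le c_9,\quad t\in\{t',t''\},
\]
and equating the two third coordinates forces $E^k(t'')-E^k(t')\le 2c_9$. Since $t'>t_{\underline{s}}\ge 0$, this contradicts~\eqref{2l}.

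For the endpoint case $t'=t_{\underline{s}}$, the same conclusion will hold as soon as one has an upper bound of the form $(g_{\sigma^k(\underline{s})}(E^k(t_{\underline{s}})))_3\le E^k(t_{\underline{s}})+O(1)$ along a suitable sequence of $k$'s, since combined with the lower bound $E^k(t'')-c_9$ coming from the equality of third coordinates this again violates~\eqref{2l}. When $\tau_{\underline{s},k+1}=t_{\underline{s}}$ for infinitely many $k$ (the alternative in which the sequence $(g_k)$ itself is already uniformly Cauchy near $t_{\underline{s}}$), such a bound is immediate from~\eqref{3g1}. Otherwise I would pick $t_n\downarrow t_{\underline{s}}$ with $t_n>t_{\underline{s}}$ and, for each $k$, choose $n$ so large that $t_n\ge\tau_{\underline{s},k+1}$, so that~\eqref{3g1} applies and yields $(g_{\sigma^k(\underline{s})}(E^k(t_n)))_3\le E^k(t_n)+c_9$; continuity of $g_{\sigma^k(\underline{s})}$ at $E^k(t_{\underline{s}})$ (from Lemma~\ref{lemma3}) and the uniform convergence of the subsequence $(g_{k_j})$ constructed there then let one pass to the limit $n\to\infty$ and recover the required bound at the endpoint.

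The main obstacle is precisely this endpoint case: the a priori bound~\eqref{3g1} is only guaranteed on $[\tau_{\underline{s},1},\infty)$, and when $(|s_k|)$ is such that $\tau_{\underline{s},k+1}$ descends to $t_{\underline{s}}$ strictly from above for every $k$, the estimate does not apply at $t_{\underline{s}}$ itself. Resolving this requires replaying the subsequence construction of Lemma~\ref{lemma3}—where the contraction factors $\alpha^{k_j}$ in~\eqref{3f3} absorb the error contributions from the "bad" values of $t_l$ above $t_{\underline{s}}$—and then propagating the resulting third-coordinate control through the iteration formula~\eqref{3i}.
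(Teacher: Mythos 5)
Your argument for $t'>t_{\underline{s}}$ is correct and identical in substance to the paper's: apply~\eqref{3g1} to the shifted sequence, conclude $E^k(t'')-E^k(t')\le 2c_9$, contradict~\eqref{2l}. The endpoint case, however, contains a genuine gap which you yourself flag as the ``main obstacle'' but do not close. You want to bound $\bigl(g_{\sigma^k(\underline{s})}(E^k(t_{\underline{s}}))\bigr)_3$ above, and your route is to apply~\eqref{3g1} at $t_n$ and then ``let $n\to\infty$''. But the precondition for~\eqref{3g1} here is $t_n\ge\tau_{\underline{s},k+1}$, and in the difficult subcase you are treating ($\tau_{\underline{s},k+1}>t_{\underline{s}}$ for all $k$) this inequality goes the \emph{wrong} way for large $n$: since $t_n\downarrow t_{\underline{s}}<\tau_{\underline{s},k+1}$, for fixed $k$ only finitely many $n$ satisfy $t_n\ge\tau_{\underline{s},k+1}$. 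So you cannot hold $k$ fixed and pass to the limit $n\to\infty$ inside the estimate; the double limit is exactly the difficulty, and ``replaying Lemma~\ref{lemma3}'' is a hope, not a proof.

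The paper resolves this differently and more robustly. It does not try to show $(f^k(g_{\underline{s}}(t_{\underline{s}})))_3\le E^k(t_{\underline{s}})+O(1)$; instead it proves a separate forward-propagation statement, Lemma~\ref{lemma2a}, which says that once one orbit's third coordinate leads another's by at least $H$ (same itinerary), that lead persists for all later times. Combined with the large-$k$ estimate from~\eqref{3g1}, this gives a bound $x_{k,3}<y_{k,3}+H$ that holds for \emph{all} $k$ simultaneously (not just large $k$), with $y_k$ the orbit of a fixed intermediate point $v\in(t_{\underline{s}},w)$. Because this bound is uniform in $k$, one may then take the single limit $u\to t_{\underline{s}}$ for each fixed $k$ using the continuity of $g$ from Lemma~\ref{lemma3}, obtaining $x'_{k,3}\le y_{k,3}+H<z_{k,3}$ for large $k$, which separates $g_{\underline{s}}(t_{\underline{s}})$ from $g_{\underline{s}}(w)$. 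Note the three-point comparison $t_{\underline{s}}<v<w$: the paper never needs control of the endpoint orbit in terms of $E^k(t_{\underline{s}})$ itself, only in terms of $E^k(v)$ for some fixed $v$ slightly above $t_{\underline{s}}$. If you want to salvage your approach, you would need Lemma~\ref{lemma2a} or an equivalent monotonicity/propagation device that decouples the $k$-limit from the $n$-limit; without it the endpoint case is not proved.
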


\begin{proof}
Let $t_{\underline{s}}<v<w$ and  and $k\in\NN$. Put
\[
y_k:=f^k\left(g_{\underline{s}}(v)\right)
=g_{\sigma^k(\underline{s})}\left(E^k(v)\right)
\quad\text{and}\quad
z_k:=f^k\left(g_{\underline{s}}(w)\right)
=g_{\sigma^k(\underline{s})}\left(E^k(w)\right).
\]
For large $k$ we have $v\geq \tau_{\underline{s},k+1}$ and
thus $E^k(w)\geq E^k(v)\geq E^k(\tau_{\underline{s},k+1})
=\tau_{\sigma^k(\underline{s}),1}$.
It follows from~\eqref{3g1} that
\[
\left|y_k
-(2s_{k,1},2s_{k,2},E^k(v))\right|\leq c_9
\quad\text{and}\quad
\left|z_k
-(2s_{k,1},2s_{k,2},E^k(w))\right|\leq c_9
\]
for such~$k$.
In particular, $\left|y_{k,3}-E^k(v)\right|\leq c_9$ and
$\left|z_{k,3}-E^k(w)\right|\leq c_9$ for large~$k$. From~\eqref{2l}
we deduce that $E^k(w)>E^k(v)+H+2c_9$ and thus
\begin{equation}\label{xk3}
z_{k,3}>y_{k,3} +H
\end{equation}
for large~$k$.
In particular, $z_k\neq y_k$ and thus $g_{\underline{s}}(w)
\neq g_{\underline{s}}(v)$.
Hence $g$ is injective on  $(t_{\underline{s}},\infty)$.

Lemma~\ref{lemma2a} and~\eqref{xk3} yield
that $ y_{k,3}<z_{k,3}+H$ for all $k\in\NN$.
The same argument shows that if $t_{\underline{s}}<u<v$ and
$x_k:=f^k(g_{\underline{s}}(u))$, then
$ x_{k,3}<y_{k,3}+H$ for all $k\in\NN$.
Let $x'_k:=f^k(g_{\underline{s}}(t_{\underline{s}}))$.
With $u\to t_{\underline{s}}$ we obtain $x'_{k,3}\leq y_{k,3}+H$
for all $k\in\NN$.
Using~\eqref{xk3} we obtain
$x'_{k,3}<z_{k,3}$
and thus
$g_{\underline{s}}(t_{\underline{s}})\neq g_{\underline{s}}(w)$.
Hence $g$ is in fact injective on $[t_{\underline{s}},\infty)$.
\end{proof}

\begin{lemma}\label{lemma7}
Let $x\in J$. Then
$x_{k,3}\geq E^k(t_{\underline{s}(x)})$
for $k\geq 0$.
\end{lemma}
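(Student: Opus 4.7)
The strategy is proof by contradiction: I forward-iterate an upper bound on the third coordinate until it conflicts with the definition of $t_{\underline{s}(x)}$. Write $t_0:=t_{\underline{s}(x)}$ and suppose $x_{k,3}<E^k(t_0)$ for some $k\geq 0$.

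The key preliminary is the numerical fact that $a>1$. From $a\geq e^M-m$ combined with the elementary inequality $e^t-t\geq 1$ (with equality only at $t=0$) and the hypothesis $m<M$, we get
\[
a\geq e^M-m>e^M-M\geq 1.
\]
This is what makes the recursion for $f_3$ compatible with iteration of $E(t)=e^t-1$: since $|h|=1$ and hence $h_3\leq 1$ everywhere, for every $x\in\RR^3$ one has
\[
f_3(x)=e^{x_3}h_3(x_1,x_2)-a\leq e^{x_3}-1=E(x_3).
\]
A straightforward induction then shows that if $x_{k,3}\leq u$ for some $u\in\RR$, then $x_{k+j,3}\leq E^j(u)$ for all $j\geq 0$.

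Combining this with~\eqref{2c1} yields $|x_{k+j}|\leq e^{x_{k+j-1,3}}+a\leq e^{E^{j-1}(u)}+a=E^j(u)+1+a$, and since $x_{k+j}\in P(s_{k+j})\times\RR$ we have $2|s_{k+j}|\leq|(x_{k+j,1},x_{k+j,2})|+\sqrt{2}\leq|x_{k+j}|+\sqrt{2}\leq E^j(u)+1+a+\sqrt{2}$. To close the argument in the principal case $t_0>0$, by continuity of $E^k$ pick $t'\in(0,t_0)$ with $x_{k,3}<E^k(t')$ and set $u:=E^k(t')$. The estimate becomes $2|s_{k+j}|\leq E^{k+j}(t')+1+a+\sqrt{2}$, so $\limsup_{n\to\infty}|s_n|/E^n(t')\leq\tfrac12<\infty$, meaning $\underline{s}(x)$ satisfies~\eqref{3a} with $t=t'$. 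Hence $t_0\leq t'$, contradicting $t'<t_0$. The marginal case $t_0=0$, where the claim reduces to $x_{k,3}\geq 0$, follows from the observation that $x_{k+1,3}>M$ together with~\eqref{2c1} forces $x_{k,3}\geq\log(M+a)$, which is non-negative because the hypothesis $a\geq e^M-m$ combined with $e^M\geq 1+M$ yields $M+a\geq 1$.

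The main obstacle I anticipate is the extraction of the seemingly innocuous fact $a>1$, which is essential for the clean comparison $f_3(x)\leq E(x_3)$; without it, the forward iteration would accumulate additive errors and the crisp bound $x_{k+j,3}\leq E^j(u)$ would fail, producing only $t_0\leq t'+\varepsilon$ in place of the desired contradiction.
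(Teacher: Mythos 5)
Your proof is correct in its main case and rests on exactly the three ingredients the paper uses: the forward bound $x_{k+1,3}\leq E(x_{k,3})$ (for which $a\geq e^M-m>e^M-M\geq 1$ is essential, as you correctly flag), the comparison $2|s_n|\leq e^{x_{n-1,3}}+O(1)$ derived from~\eqref{2c1}, and the minimality property of $t_{\underline{s}}$. The only real difference is one of phrasing: you argue by contradiction, pushing a hypothetical shortfall $x_{k,3}<E^k(t')$ forward through the iteration bound to conclude that~\eqref{3a} holds at some $t'<t_{\underline{s}}$, whereas the paper argues directly, using~\eqref{3j} to locate arbitrarily large $l$ with $2|s_l|\geq E^l(t_{\underline{s}}-\delta)+a+3$ and pulling back along the very same bound to get $E^k(t_{\underline{s}}-\delta)\leq x_{k,3}$. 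These are the two standard ways of exploiting an infimum and give nothing genuinely different.

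Two small slips in your marginal case $t_{\underline{s}}=0$. First, the bound $x_{k,3}>\log(M+a)$ does not come from~\eqref{2c1} (which only gives $e^{x_{k,3}}\geq |x_{k+1}|-a>M-a$, useless if $M\leq a$); it comes from the coordinate inequality $x_{k+1,3}=e^{x_{k,3}}h_3-a\leq e^{x_{k,3}}-a$ that you established earlier. Second, the asserted implication that $a\geq e^M-m$ and $e^M\geq 1+M$ give $M+a\geq 1$ does not actually close: the chain yields $M+a>e^M$ and $a>1$, which gives $M+a\geq 1$ only when $M\geq 0$, and that is nowhere assumed. Since the paper itself dismisses $t_{\underline{s}}=0$ as trivial without comment (implicitly taking $M\geq 0$, which does hold for the concrete Zorich maps), this is a shared loose end rather than a defect specific to your argument; one can repair it, if desired, by iterating the improvement $x_{n,3}>\log(M+a)$ to conclude $x_{n,3}\geq t^*>0$ where $t^*$ is the fixed point of $t\mapsto\log(t+a)$.
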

\begin{proof}
We put
$\underline{s}=\left(s_k\right):=\underline{s}(x)$.
As the conclusion is trivial for $t_{\underline{s}}=0$ we
may assume that $t_{\underline{s}}>0$.
Since $a\geq e^M-M\geq 1$ we have
$x_{k+1,3}\leq \exp\left(x_{k,3}\right)-a\leq
\exp\left(x_{k,3}\right)-1=E\left(x_{k,3}\right)$
and thus
\begin{equation}\label{newc}
x_{n,3}\leq  E^{n-k} \left(x_{k,3}\right)
\end{equation}
for $n\geq k$.
Because $|x_{k,1}-2 s_{k,1}|\leq 1$ and
$|x_{k,2}-2 s_{k,2}|\leq 1$ we deduce from~\eqref{2c1} that
\begin{equation}\label{newa}
2|s_{k}|\leq |x_{k}|+2\leq \exp(x_{k-1,3}) +a +2
=E(x_{k-1,3})+a+3
\end{equation}
for all $k\geq 0$.
Let now $\delta\in (0,t_{\underline{s}})$.
Then there exists
arbitrarily large $l$ with
\begin{equation}\label{newb}
2|s_{l}|-a-3\geq E^l(t_{\underline{s}}-\delta).
\end{equation}
In particular, given $k\geq 0$ there exists $l> k$
with this property.
Combining~\eqref{newa} and~\eqref{newb}
we see that
$E^l(t_{\underline{s}}-\delta)\leq E(x_{l-1,3})$.
Hence
\[
E^{l-k} \left( E^k(t_{\underline{s}}-\delta) \right)
=
E^l(t_{\underline{s}}-\delta)\leq E(x_{l-1,3})
\leq  E^{l-k} \left(x_{k,3}\right)
\]
by~\eqref{newc} and thus
\[
E^k(t_{\underline{s}}-\delta)
\leq
x_{k,3}.
\]
As $\delta$ can be chosen arbitrarily small, the conclusion follows.
\end{proof}

\begin{lemma}\label{lemma4}
Let $x\in J$. Then there exist $t\geq
t_{\underline{s}(x)}$ with $x=g_{\underline{s}(x)}(t)$.
\end{lemma}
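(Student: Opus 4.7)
The plan is to exploit the identity $x = L_0 \circ L_1 \circ \dots \circ L_{k-1}(x_k)$, which holds for every $k \geq 0$ because $x_k \in T(s_k)$ and $L_k = \Lambda^{s_k}$ is the inverse of $f|_{T(s_k)}$. For each sufficiently large $k$ I will choose $t^{(k)} \geq 0$ so that $L_k(0, 0, E^{k+1}(t^{(k)}) + M)$ is uniformly close to $x_k$, and then pull back through $L_0 \circ \dots \circ L_{k-1}$ via the contraction estimate~\eqref{2b} to conclude $|x - g_k(t^{(k)})| \to 0$. Concretely, both $x_k$ and $L_k(0,0, E^{k+1}(t^{(k)}) + M)$ lie in $T(s_k) = P(s_k) \times (M,\infty)$, so by~\eqref{2j} their first two coordinates differ by at most a fixed constant; choosing $t^{(k)}$ so that $E^k(t^{(k)}) = x_{k,3}$ (permissible for large $k$ by Lemma~\ref{lemma7}, which ensures $x_{k,3} \geq E^k(t_{\underline{s}}) \geq 0$), the estimate~\eqref{2k} forces the third coordinates to agree up to an absolute additive constant. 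Hence $|x_k - L_k(0,0,E^{k+1}(t^{(k)}) + M)| \leq C$ uniformly in $k$, so iteration of~\eqref{2b} yields $|x - g_k(t^{(k)})| \leq \alpha^k C \to 0$.

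Next I verify that $(t^{(k)})$ lies in a bounded interval. The lower bound $t^{(k)} \geq t_{\underline{s}}$ is immediate from Lemma~\ref{lemma7}. For the upper bound, estimate~\eqref{2c1} gives $x_{k+1,3} \leq e^{x_{k,3}}$; setting $u_k = x_{k,3} + 1$, one checks that $u_{k+1} \leq E(u_k)$ whenever $u_k$ exceeds a small absolute constant, so by induction $x_{k,3} \leq E^k(T)$ and $t^{(k)} \leq T$ for a constant $T$ depending only on $x$. (If $x_{k,3}$ never exceeds that threshold, then $x_{k,3}$ is bounded and so is $t^{(k)}$.) With $(t^{(k)})$ contained in the compact interval $[t_{\underline{s}}, T]$, Lemma~\ref{lemma3} supplies a subsequence $(k_j)$ along which $g_{k_j} \to g_{\underline{s}}$ uniformly on $[t_{\underline{s}}, \infty)$, and a further refinement by compactness lets me assume $t^{(k_j)} \to t^* \in [t_{\underline{s}}, T]$. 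Uniform convergence together with the continuity of $g_{\underline{s}}$ then gives $g_{k_j}(t^{(k_j)}) \to g_{\underline{s}}(t^*)$, while the first paragraph gives $g_{k_j}(t^{(k_j)}) \to x$. Hence $x = g_{\underline{s}}(t^*)$, as required.

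The principal technical difficulty is the upper bound for $(t^{(k)})$: the recursion $x_{k+1,3} \leq e^{x_{k,3}}$ is only marginally stronger than iterated exponentiation, whereas $E^k(t)$ grows slightly more slowly, so the comparison requires a uniform additive shift and only begins to work once $x_{k,3}$ exceeds an absolute threshold. The remainder is a routine combination of the contraction estimates of Section~\ref{prelims} with the uniform-subsequence device of Lemma~\ref{lemma3}.
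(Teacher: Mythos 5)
Your proof is correct and follows essentially the same template as the paper's: define $t^{(k)}$ by $E^k(t^{(k)})=x_{k,3}$ (the paper calls this $u_k$), observe $|x - g_k(t^{(k)})|\le \alpha^k\cdot\text{const}$ by pulling back through~\eqref{2b}, bound $t^{(k)}\ge t_{\underline{s}}$ via Lemma~\ref{lemma7}, and pass to a limit. The one place where you take a longer road is the control of $(t^{(k)})$ from above. The paper notices that $a\ge e^M-M\ge 1$ gives the sharper inequality $x_{k+1,3}\le e^{x_{k,3}}-a\le e^{x_{k,3}}-1=E(x_{k,3})$ (the formula for $f_3$ directly, not~\eqref{2c1}, which only controls $|x_{k+1}|$), whence $E^{k+1}(t^{(k+1)})\le E(E^k(t^{(k)}))=E^{k+1}(t^{(k)})$ and so $(t^{(k)})$ is \emph{decreasing}. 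Combined with the lower bound this yields convergence outright, with no need for your auxiliary $+1$ shift, no threshold check, and no Bolzano--Weierstrass subsequence extraction. Your argument still works -- you do need to invoke Lemma~\ref{lemma3} for the case $t^*=t_{\underline{s}}$, and the paper implicitly relies on this too -- but the monotonicity observation is what makes the paper's version clean, and it is worth isolating in your write-up.
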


\begin{proof}
We again put
$\underline{s}=\left(s_k\right):=\underline{s}(x)$ and
note that
\[
L_k(f^{k+1}(x))=f^k(x)=(x_{k,1},x_{k,2},x_{k,3})
\]
where $\left|x_{k,1}-2s_{k,1}\right|\leq 1$,
$\left|x_{k,2}-2s_{k,2}\right|\leq 1$ and
$x_{k,3}\geq M$. Define $u_k$ by $x_{k,3}=E^k(u_k)$.
Lemma~\ref{lemma7} implies that $u_k\geq t_{\underline{s}}$.
Moreover, it follows from~\eqref{newc} that $(u_k)$ is
decreasing and hence convergent.

Using~\eqref{3b} we find that
\begin{eqnarray} \label{newd}
 & &   
\left|L_k(f^{k+1}(x))-L_k((0,0,M+E^{k+1}(u_k)))\right| \nonumber\\
  & = & \left|(x_{k,1}-2s_{k,1}-v_1,x_{k,2}-2s_{k,2}-v_2,
  -R_k(u_k))\right| \\
  & \leq & 4+\log(M+a). \nonumber
\end{eqnarray}
Since
$x=(L_0\circ L_1\circ \ldots \circ L_k)(f^{k+1}(x))$
we deduce similarly as in the proofs of Lemma~\ref{lemma1}
and Lemma~\ref{lemma2} from~\eqref{2b} and~\eqref{newd}  that
\[
|x-g_k(u_k)|\leq \alpha^{k}(4+\log(M+a))
\]
so that $x=\lim_{k\to\infty} g_k(u_k)$.
With $t:=\lim_{k\to\infty} u_k$ we
obtain $x=g_{\underline{s}}(t)$. 
\end{proof}

Proposition~\ref{prop2}
follows immediately from Lemmas~\ref{lemma1}--\ref{lemma4}. As 
already mentioned it yields together with 
Proposition~\ref{prop1} that $J$ is a union of pairwise
disjoint Devaney hairs. In fact, 
if $\Sigma'$ denotes the set of admissible sequences we have
\[
J=\bigcup_{{\underline s} \in\Sigma'}
g_{\underline s}\left(\left[ t_{\underline s},\infty\right)\right)
\quad\text{and}\quad
C=\left\{g_{\underline s}\left(t_{\underline s}\right):\; 
{\underline s} \in\Sigma'\right\}.
\]

\section{The Hausdorff dimension of the hairs} \label{hausdorffhairs}
In this section we prove that $\dim J =3$, thereby establishing
the analogue of Theorem~B.
Following McMullen~\cite{McMullen87} we consider for $k\in \NN$ a
finite collection $A_k$ of disjoint compact subsets of $\RR^n$
such that the following two conditions are satisfied:
\begin{enumerate}\item[(a)] every element of $A_{k+1}$ is contained in a
unique element of $A_k$;

\item[(b)] every element of $A_k$ contains at least one element of
$A_{k+1}$.
\end{enumerate}
Denote by $\overline{A}_k$ the union of all elements of $A_k$ and
put 
\[
A:=\bigcap^\infty_{k=1} \overline{A}_k. 
\]
Suppose that $(\Delta_k)$ and
$(d_k)$ are sequences of positive real numbers such that if $B\in
A_k$, then
\[
\dens\left(\overline{A}_{k+1},B\right)
:=\frac{\area\left(\overline{A}_{k+1}\cap B\right)}{\area(B)}\geq\Delta_k
\] 
and
\[
\diam B\leq d_k.
\]
Then we have the following result~\cite[Proposition~2.2]{McMullen87}.
\begin{lemma} \label{lemmamcm}
Let $A$, $A_k$, $\Delta_k$ and $d_k$ be as above. Then
\[
\limsup_{k\to \infty}\frac{\sum^{k+1}_{j=1}\;|\log\Delta _j|}{|\log d_k|}
\geq n-\dim A.
\]
\end{lemma}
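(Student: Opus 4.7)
The plan is to apply the mass distribution principle: I construct a Borel probability measure $\mu$ supported on $A$ satisfying $\mu(B(x,r)) \leq C r^s$ for every small ball, which forces $\dim A \geq s$. I aim to show this for every $s < n - \alpha$, where $\alpha$ denotes the limsup on the left side of the claim; taking $s \uparrow n-\alpha$ gives the desired inequality $n - \dim A \leq \alpha$.

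I would build $\mu$ as a cylinder measure on the tree of nested elements. For $B \in A_k$ with chain of ancestors $B^{(1)} \supset \cdots \supset B^{(k)} = B$, set
\[
\mu(B) := \frac{|B^{(1)}|}{|\overline{A}_1|} \prod_{j=2}^{k} \frac{|B^{(j)}|}{|B^{(j-1)} \cap \overline{A}_j|},
\]
where $|\cdot|$ denotes Lebesgue measure. The density hypothesis makes each denominator positive, and the weights of the children inside any parent sum to the parent's weight, so $\mu$ extends to a Borel probability supported on $A = \bigcap \overline{A}_k$. Using $|B^{(j-1)} \cap \overline{A}_j| \geq \Delta_{j-1}|B^{(j-1)}|$ the product telescopes to
\[
\mu(B) \leq \frac{|B|}{|\overline{A}_1|\prod_{j=1}^{k-1}\Delta_j}.
\]

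To estimate $\mu(B(x,r))$ for small $r$, I fix $\alpha' > \alpha$ and choose $k = k(r) \geq 2$ minimal with $d_k \leq r$, so that $d_{k-1} > r$. Every $B' \in A_k$ meeting $B(x,r)$ lies in $B(x,2r)$, and by disjointness of the elements of $A_k$ the Lebesgue measures of such $B'$ sum to at most $Cr^n$. Summing the weight bound gives
\[
\mu(B(x,r)) \leq \frac{C r^n}{\prod_{j=1}^{k-1}\Delta_j}.
\]
For $k$ large the hypothesis applied at index $k-1$ yields $\sum_{j=1}^{k} |\log \Delta_j| \leq \alpha'|\log d_{k-1}|$, hence $\prod_{j=1}^{k-1}\Delta_j \geq d_{k-1}^{\alpha'} > r^{\alpha'}$, so $\mu(B(x,r)) \leq C r^{n-\alpha'}$. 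The mass distribution principle then gives $\dim A \geq n - \alpha'$, and letting $\alpha' \downarrow \alpha$ concludes the argument.

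The main technical subtlety is matching scales: one must choose $k$ with $d_k \leq r < d_{k-1}$ so that the volume factor $r^n$ (from disjointness inside $B(x,2r)$) combines with the bound $1/\prod \Delta_j < r^{-\alpha'}$. This index shift by one is exactly why the summation in the hypothesis runs to $k+1$ rather than $k-1$, and the shift is what makes the argument work without any lower bound on the diameters of elements of $A_k$.
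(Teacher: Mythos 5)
Your argument is correct, and it is essentially the standard proof of McMullen's Proposition~2.2, which the paper simply cites without reproving. You build the natural density-weighted cylinder measure, telescope the density bound to get $\mu(B)\le |B|/(\;|\overline{A}_1|\prod_{j<k}\Delta_j)$, use disjointness plus $\diam\le d_k$ to bound the total Lebesgue mass of level-$k$ pieces meeting $B(x,2r)$ by $Cr^n$, and then pick the scale $k$ with $d_k\le r<d_{k-1}$ so that the hypothesis (at index $k-1$) gives $\prod_{j<k}\Delta_j\ge d_{k-1}^{\alpha'}>r^{\alpha'}$; Frostman's lemma then yields $\dim A\ge n-\alpha'$. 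Your remark on the index shift correctly explains why the numerator in the statement runs to $k+1$. The only implicit assumptions worth flagging are the harmless ones: $d_k\to 0$ (so $k(r)$ exists and $|\log d_{k-1}|=-\log d_{k-1}$ for large $k$) and $\Delta_j\le 1$ (automatic, being a density), both of which hold in the paper's application.
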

The construction of the sets $A_k$ will be very similar to that 
in~\cite{McMullen87}, but in contrast to~\cite{McMullen87} we will
not have $\Delta_k\geq\Delta$ for some $\Delta>0$ and all~$k$, but
$\lim_{k\to \infty}\Delta_k=0$. However, the sequence
$(\Delta_k)$ will tend to $0$ much more 
slowly than $(d_k)$ so that Lemma~\ref{lemmamcm} can 
still be applied.

To begin the construction we note that there exists $q\in (0,1)$
such that
\[
\left\{(x_1,x_2,x_3)\in\RR^3:\; 
x_1^2+x_2^2+x_3^2=1, x_3\geq\tfrac{1}{2}\right\}
\subset
h\left(\left\{(x_1,x_2)\in \RR^2:\; 
|x_1|\leq q,|x_2|\leq q\right\}\right).
\]
For $r=(r_1,r_2)\in S$ and $\ell\in\NN$ we consider the box
\[
R(r,\ell)=R(r_1,r_2,\ell)=\left\{x\in \RR^3:\; |x_1-2r_1|\leq
q,\;|x_2-2r_2|\leq q,\;\ell \leq x_3 \leq \ell+\tfrac{3}{4}\right\}
\] 
and we denote by $U$ the collection of all $R(r,\ell)$; that is,
\[
U=\left\{R(r,\ell):\; r\in S,\;\ell\in\NN\right\}.
\]
We note that $F(R(r,\ell))$ does not depend on $r\in S$ 
and that
\[
\left\{x\in\RR^3:\; e^\ell\leq |x|\leq e^{3/4}e^\ell,\; x_3\geq
\tfrac{1}{2}|x|\right\}
\subset F(R(r,\ell))
\subset
\left\{x\in\RR^3:\; e^\ell\leq
|x|\leq e^{3/4}e^\ell\right\}.
\]
Since $2<\exp\left(\frac{3}{4}\right)<3$
this yields 
\begin{equation}\label{4a}
\left\{x\in\RR^3:\;e^\ell\leq|x|\leq 2e^\ell,\;x_3
\geq\tfrac{1}{2}|x|\right\} 
\subset  f(R(r,\ell)) 
\subset \left\{x\in \RR^3:
\;\tfrac{1}{2}e^\ell\leq|x|\leq 3e^\ell\right\}
\end{equation}
if $\ell$ is large. We put
\[
U(\ell):=\{R\in U:\; R\subset f(R(r,\ell))\}
\quad\text{and}\quad
\overline{U}(\ell):=\bigcup_{R\in U(\ell)}R,
\]
noting that this definition does not depend on~$r\in S$.
We deduce from~\eqref{4a} that there exists a positive constant $\delta$ 
such that
\begin{equation}\label{4b} 
\dens\left(\overline{U}(\ell),f(R(r,\ell))\right)\geq \delta
\end{equation}
for all sufficiently large~$\ell$. 

Let now $B_0:=R(0,0,\ell_0)$
where $\ell_0$ is so large that~\eqref{4a} and~\eqref{4b} hold for
$\ell\geq\ell_0$. We put $A_0:=\{B_0\}$.

We will now construct the sets $A_k$ inductively such that if
$B\in A_k$, then there exist $r_j=(r_{j,1},r_{j,2})\in S$ and
$\ell_j\in \NN$, for $j=1,2,\ldots,k$, such that
\[
f^k(B)=R(r_k,\ell_k),
\]
\[
f^j(B)\subset R(r_j,\ell_j)\quad \text{for}\ j=1,2,\ldots,k-1
\]
and
\[
\ell_j\geq \tfrac{1}{2}\exp (\ell_{j-1})\geq \ell_{j-1}\geq
\ell_0
\quad \text{for}\ j=1,2,\ldots,k.
\]
Assuming that $A_k$ has been constructed
and $B$ is as above, we put
\[
A_{k+1}(B):=\left\{(L_0\circ L_1\circ \ldots \circ L_k)(R):\; 
R\in U(\ell_k)\right\}
\] 
and
\[
A_{k+1}:=\bigcup_{B\in A_k} A_{k+1}(B),
\]
with $L_j=\Lambda^{r_j}$ as in section~\ref{constructionhairs}.
Then $A_{k+1}$ has the
required properties.

It follows from~\eqref{2g}, \eqref{4a} and~\eqref{4b} that if $B\in A_k$ 
with $r_j$ and $\ell_j$ as above, then
\[
\dens\left(L_k(\overline{U}(\ell_k)),R(r_k,\ell_k)\right)
=\dens\left(L_k(\overline{U}(\ell_k)),L_k(f(R(r_k,\ell_k)))\right)
\geq
\eta \delta
\] 
where $\eta:= c_5/(216 c_6)$.  
Since
\[
(L_j\circ\ldots\circ L_k)(f(R(r_k,\ell_k)))\subset R(r_j,\ell_j)\subset
\left\{x\in\RR^3:\; \tfrac12 e^{l_{j-1}}\leq |x|\leq 3 e^{l_{j-1}}\right\}
\]
for $1\leq j\leq k-1$
we conclude, using again~\eqref{2g},  
by induction that
\begin{equation}\label{4c}
\begin{aligned}
\dens\left(\overline{A}_{k+1},B\right)
&
= 
\dens\left((L_0\circ \ldots\circ L_k)\left(\overline{U}(\ell_k)\right),
    (L_0\circ\ldots\circ L_k)(f(R(r_k,\ell_k)))\right) 
\\ &
\geq  \eta^{k+1}\delta.
\end{aligned}
\end{equation}
On the other hand, since $R(r_k,\ell_k)\subset H_{\geq \ell_k}$ we deduce from~\eqref{2h} that
\[
\diam L_{k-1}(R(r_k,\ell_k))\leq\frac{c_4\pi}{\ell_{k}}\diam R(r_k,\ell_k)\leq
\frac{3 c_4\pi}{\ell_{k}}.
\] 
Putting
$E_*(t)=\frac{1}{2}e^t$ 
and noting that
$\ell_j\geq E_*(\ell_{j-1})$ we find that
\[
\diam L_{k-1}(R(r_k,\ell_k))\leq
\frac{3 c_4\pi}{E_*^k(\ell_0)}.
\] 
Using~\eqref{2b} we deduce that
\begin{equation} \label{4d}
 \diam B= \diam \left((L_0\circ\ldots\circ L_{k-1})(R(r_k,\ell_k))\right) 
\leq \alpha^{k-1} \frac{3 c_4\pi}{E_*^k(\ell_0)}
\leq\frac{1}{2E_*^k(\ell_0)} 
\end{equation}
for large~$k$.

Because of~\eqref{4c} and~\eqref{4d} we can apply Lemma~\ref{lemmamcm}
with
\[
\Delta_k:=\eta^{k+1}\delta \quad\text{and}\quad\
d_k:=\frac{1}{2 E_*^k(\ell_0)}.
\]
Since $\eta<1$ and $\delta<1$ we
have
\[
 \sum^{k+1}_{j=1} |\log \Delta_j|
 =\sum^{k+1}_{j=1}\left(\log\frac{1}{\delta}+   (j+1) \log\frac{1}{\eta}\right)
 =(k+1)\log\frac{1}{\delta}+\frac{k^2+5k+4}{2} \log\frac{1}{\eta}
 \leq  k^3
\]
for large~$k$. On the other hand,
\[
|\log d_k|=\log\left(2 E_*^{k}(\ell_0)\right)
= E_*^{k-1}(\ell_0)
\]
for large~$k$. It is not difficult to see that
\[
\lim_{k\to\infty}
\frac{k^3}{ E_*^{k-1}(\ell_0)}=0.
\] 
Hence $\dim A=3$
by Lemma~\ref{lemmamcm}.
Since clearly $A\subset J$ we also have $\dim J=3$.
\begin{rem}
It follows from the construction that $A\subset I(f)$. Thus
we also have $\dim I(f)=3$.
\end{rem}

\section{The Hausdorff dimension of the hairs without endpoints}
\label{hausdorffwithout}

In this section we prove that $J\backslash C$ has Hausdorff
dimension 1, thereby establishing the analogue of Theorem~C and
thus completing the proof of Theorem~\ref{thm1}.

The following lemma is standard~\cite[Lemma~4.8]{Falconer90}. 
Here the open
ball of radius $r$ around a point $x\in \RR^n$ is denoted by
$B(x,r)$.

\begin{lemma} \label{lemmafalc}
Let $K\subset \RR^n$ be bounded, $R>0$ and
$r:K\to(0,R]$. Then there exists an at most countable subset $L$
of $K$ such that $B(x,r(x))\cap B(y,r(y))=\emptyset$ for $x,y\in
L$, $x\neq y$, and
\[
\bigcup_{x\in K} B(x,r(x))\subset \bigcup_{x\in L}
B(x,4r(x))
\] 
\end{lemma}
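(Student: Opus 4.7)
The plan is a Vitali-type greedy construction, refined slightly to yield the factor $4$ rather than the usual factor $5$. Enumerate points of $L$ inductively. Let $R_1 := \sup_{x \in K} r(x) \le R$ and pick $x_1 \in K$ with $r(x_1) > \tfrac{2}{3} R_1$. Having selected $x_1, \dots, x_n$, set
\[
K_{n+1} := \left\{x \in K : B(x, r(x)) \cap B(x_i, r(x_i)) = \emptyset \text{ for all } i \le n\right\}.
\]
If $K_{n+1}$ is empty we stop; otherwise let $R_{n+1} := \sup_{x \in K_{n+1}} r(x)$ and pick $x_{n+1} \in K_{n+1}$ with $r(x_{n+1}) > \tfrac{2}{3} R_{n+1}$. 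Define $L = \{x_1, x_2, \dots\}$.

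By construction the balls $B(x_i, r(x_i))$ are pairwise disjoint. Since $K$ is bounded and all radii lie in $(0, R]$, these balls are contained in a fixed bounded region of $\RR^n$. A volume comparison shows that for every $\delta > 0$ only finitely many pairwise disjoint balls of radius $\ge \delta$ fit in such a region, so either the process terminates or $r(x_n) \to 0$. In either case $L$ is at most countable.

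To verify the covering property, fix $x \in K$. If $B(x, r(x))$ were disjoint from every $B(x_i, r(x_i))$, then $x \in K_{n+1}$ for every $n$ for which that set is defined, and the selection rule would give $r(x_{n+1}) > \tfrac{2}{3} R_{n+1} \ge \tfrac{2}{3} r(x)$ for all $n$, contradicting either termination or $r(x_n) \to 0$. So there is a least index $i$ with $B(x, r(x)) \cap B(x_i, r(x_i)) \ne \emptyset$. Minimality forces $x \in K_i$, hence $r(x) \le R_i < \tfrac{3}{2} r(x_i)$. For any $z \in B(x, r(x))$ the triangle inequality then gives
\[
|z - x_i| \le |z - x| + |x - x_i| < r(x) + \bigl(r(x) + r(x_i)\bigr) = 2 r(x) + r(x_i) < 3 r(x_i) + r(x_i) = 4 r(x_i),
\]
so $z \in B(x_i, 4 r(x_i))$. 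This yields the required inclusion.

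The only subtlety beyond the textbook $5r$-Vitali argument is the selection threshold $\tfrac{2}{3} R_{n+1}$ in place of the more common $\tfrac{1}{2} R_{n+1}$; it is precisely this sharper choice that allows the factor $4$ asserted in the statement, since under the weaker threshold the inequality above would only give $|z - x_i| < 5 r(x_i)$. No further input is needed: the argument is purely metric and independent of the Zorich-map setting.
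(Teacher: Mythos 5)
Your proof is correct. The paper itself gives no proof of this lemma, citing only Falconer's \emph{Fractal Geometry}, Lemma~4.8; your greedy Vitali-type selection with the $\tfrac{2}{3}$-supremum threshold (giving the expansion factor $2/(2/3)+1=4$) is precisely the standard argument found there, so there is nothing to compare beyond noting that the reasoning is sound and self-contained.
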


We shall deduce the following result from Lemma~\ref{lemmafalc}.

\begin{lemma} \label{lemma5}
Let $K\subset \RR^n$ be bounded and let $\rho>1$.
Suppose that for all $x\in K$ and $\delta >0$ there exist
$r(x)\in (0,1)$, $d(x)\in (0,\delta)$ and $N(x)\in \NN$ satisfying
$N(x)d(x)^\rho\leq r(x)^n$ such that $B(x,r(x))\cap K$ can be
covered by $N(x)$ sets of diameter at most $d(x)$. Then
$\dim K\leq\rho$.
\end{lemma}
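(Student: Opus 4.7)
The plan is to apply the Vitali-type covering Lemma~\ref{lemmafalc} to extract a countable subfamily from the covering of~$K$ by the balls $B(x, r(x))$ supplied by the hypothesis, then estimate the Hausdorff $\rho$-content of~$K$ using the small-diameter covers of each $B(x, r(x)) \cap K$.

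Given $\delta > 0$, I would apply the hypothesis at each $x \in K$ to obtain $r(x) \in (0,1)$, $d(x) \in (0,\delta)$, and $N(x) \in \NN$ with $N(x) d(x)^\rho \leq r(x)^n$, together with a cover $\mathcal{U}_x$ of $B(x, r(x)) \cap K$ by $N(x)$ sets of diameter at most~$d(x)$. The crucial step is to apply Lemma~\ref{lemmafalc} not to the function $x \mapsto r(x)$ itself but to $x \mapsto r(x)/4$ (with $R = 1/4$); this produces an at most countable subset $L \subset K$ for which the balls $B(x, r(x)/4)$, $x \in L$, are pairwise disjoint and
\[
K \subset \bigcup_{x \in K} B(x, r(x)/4) \subset \bigcup_{x \in L} B(x, r(x)).
\]
The payoff of using the smaller radius is that the $4$-fold enlarged balls appearing in the conclusion of the Vitali lemma coincide with the balls on which we have the covers $\mathcal{U}_x$ furnished by the hypothesis.

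Assembling $\bigcup_{x \in L} \mathcal{U}_x$ thus yields a countable cover of~$K$ by sets of diameter at most~$\delta$, and
\[
\sum_{x \in L} \sum_{U \in \mathcal{U}_x} (\diam U)^\rho \leq \sum_{x \in L} N(x) d(x)^\rho \leq \sum_{x \in L} r(x)^n.
\]
Since $K$ is bounded, $r(x) < 1$, and the balls $B(x, r(x)/4)$ are pairwise disjoint and all contained in a fixed bounded neighbourhood of~$K$, comparing total volumes gives $\sum_{x \in L} r(x)^n \leq C$ with a constant $C$ depending only on $K$ and~$n$, independent of~$\delta$. Hence $\mathcal{H}^\rho_\delta(K) \leq C$ for every $\delta > 0$, so $\mathcal{H}^\rho(K) \leq C < \infty$ and $\dim K \leq \rho$. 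There is no substantive obstacle; the only point that requires a moment's care is the choice of $r/4$ in the Vitali lemma, which aligns the extracted covering with the covers already supplied by the hypothesis, so that the numerical bound $N(x) d(x)^\rho \leq r(x)^n$ can be used directly.
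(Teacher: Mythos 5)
Your proof is correct and is essentially the same as the paper's: both apply Lemma~\ref{lemmafalc} to the radius function $x\mapsto\tfrac14 r(x)$ so that the enlarged balls are exactly the $B(x,r(x))$, then use the disjointness of the $B(x,\tfrac14 r(x))$ together with boundedness of $K$ to bound $\sum_{x\in L}r(x)^n$ by a constant independent of $\delta$, giving a uniform bound on the $\rho$-dimensional Hausdorff pre-measure. The only cosmetic difference is that you phrase the final step explicitly in terms of $\mathcal{H}^\rho_\delta$ and its limit, whereas the paper simply concludes that the $\rho$-dimensional Hausdorff measure is finite.
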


\begin{proof} Choose $R>0$ such that $K\subset B(0,R)$ and let
$\delta>0$. Since
\[
K\subset \bigcup_{x\in K} B\left(x,\tfrac{1}{4}r(x)\right),
\]
Lemma~\ref{lemmafalc} yields the existence of an at most countable subset $L$
of $K$ such that
\[
K\subset \bigcup_{x\in L} B(x,r(x))
\]
while
\[
B\left(x,\tfrac{1}{4}r(x)\right)\cap B\left(y,\tfrac{1}{4}r(y)\right)
=\emptyset\quad  \text{for}\ x,y\in L, \;x\neq y.
\] 
For each $x\in L$, let $A_1(x),A_2(x),\ldots,A_{N(x)}(x)$ be the
sets of diameter at most $d(x)$ which cover $B(x,r(x))\cap K$ so
that $N(x)d(x)^\rho\leq r(x)^n$. Then
\[
K\subset \bigcup_{x\in L}\bigcup _{j=1}^{N(x)}A_j(x).
\]
Now
\[
\sum_{x\in L}\sum^{N(x)}_{j=1}\left(\diam A_j(x)\right)^\rho
\leq
\sum_{x\in L} N(x) d(x)^\rho\leq \sum_{x\in L} r(x)^n.
\]
Since $r(x)\leq \delta$ we have
$B\left(x,\tfrac{1}{4}r(x)\right)\leq
B\left(0,R+\tfrac{1}{4}\delta\right)$ for all $x\in L$. Since the balls
$B\left(x,\tfrac{1}{4}r(x)\right)$, $x\in L$, are pairwise disjoint,
this yields
\[
\sum_{x\in L}\left(\tfrac{1}{4}r(x)\right)^n\leq
\left(R+\tfrac{1}{4}\delta\right)^n.
\]
We obtain
\[
\sum_{x\in L}\sum^{N(x)}_{j=1}
\left(\diam A_j(x)\right)^\rho
\leq(4R+\delta)^n.
\] 
Thus the $\rho$-dimensional
Hausdorff measure of $K$ is finite. In particular, $\dim K\leq
\rho$.
\end{proof}

As in Karpi\'nska's paper~\cite{Karpinska99} the key idea
is to show that points in
$J\backslash C$ escape to $\infty$ in a comparatively small
region. To define such a region $\Omega$, we consider the function
\[
\psi:[1,\infty)\to[1,\infty),\ 
\psi(x)=\exp \left(\sqrt{\log x}\right),
\]
and put
\[
\Omega := \left\{(x_1,x_2,x_3)\in
\RR^3:\; x_3>\max\{1,M\}\ \text{and}\ 
x_1^2+x_2^2<\psi(x_3)^2\right\}.
\]
It is not difficult to see that
\begin{equation}\label{5a}\lim_{x\to\infty} \frac{\psi(x)}{x^\varepsilon}=0
\quad\text{for}\ \varepsilon>0
\end{equation}
and that
\begin{equation}\label{5b}\lim_{x\to\infty} \frac{E^k(t')}{\psi(E^k(t))}=0
\quad\text{for}\ 0<t'<t.
\end{equation}

\begin{lemma} \label{lemma6}
If $x\in J\backslash C$, then $f^k(x)\in \Omega$ for
all large~$k$.
\end{lemma}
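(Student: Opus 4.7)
The plan is to use the characterisation $x\in J\setminus C \iff x=g_{\underline{s}}(t)$ with $\underline{s}=\underline{s}(x)$ and $t>t_{\underline{s}}$, together with the estimate~\eqref{3g1} applied along the forward orbit, to show that $f^k(x)$ lies in a tube whose horizontal extent $|s_k|$ grows slower than $\psi(x_{k,3})$ for large $k$.

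More precisely, I would begin by fixing $x\in J\setminus C$ and writing $x=g_{\underline{s}}(t)$ with $t>t_{\underline{s}}\geq 0$. Choose an auxiliary parameter $t'$ with $t_{\underline{s}}<t'<t$. By~\eqref{3k} applied to $\underline{s}$, one has $|s_k|/E^k(t')\to 0$, and in particular $|s_k|\leq E^k(t')$ for all sufficiently large $k$. Next, using~\eqref{3i}, we have $f^k(x)=g_{\sigma^k(\underline{s})}(E^k(t))$. The remarks following Lemma~\ref{lemma4} give $E^k(\tau_{\underline{s},k+1})=\tau_{\sigma^k(\underline{s}),1}$, and since $\tau_{\underline{s},k+1}\to t_{\underline{s}}<t$, we have $E^k(t)\geq\tau_{\sigma^k(\underline{s}),1}$ for all large $k$. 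Thus the estimate~\eqref{3g1} applies to $g_{\sigma^k(\underline{s})}$ at the parameter $E^k(t)$ and yields
\[
\bigl|f^k(x)-(2s_{k,1},2s_{k,2},E^k(t))\bigr|\leq c_9
\]
for all large $k$. In particular $x_{k,3}\geq E^k(t)-c_9$ and $x_{k,1}^2+x_{k,2}^2\leq (2|s_{k,1}|+c_9)^2+(2|s_{k,2}|+c_9)^2\leq 8|s_k|^2+c_{11}$ for some constant $c_{11}$.

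Combining these two bounds, it suffices to show that $8|s_k|^2+c_{11}<\psi(x_{k,3})^2$ for large $k$. Since $\psi$ is increasing and $x_{k,3}\geq E^k(t)-c_9\to\infty$, it is enough to check that $|s_k|=o(\psi(E^k(t)-c_9))$. As $\psi$ satisfies $\psi(s)/\psi(s-c_9)\to 1$ as $s\to\infty$, this reduces via~\eqref{5b} (with the parameters $t'$ and $t$, noting $t_{\underline{s}}<t'<t$) to the already-established bound $|s_k|\leq E^k(t')$ for large $k$, because
\[
\frac{|s_k|}{\psi(E^k(t)-c_9)}
\leq \frac{E^k(t')}{\psi(E^k(t)-c_9)}
=\frac{E^k(t')}{\psi(E^k(t))}\cdot\frac{\psi(E^k(t))}{\psi(E^k(t)-c_9)}
\longrightarrow 0.
\]
Finally, since $E^k(t)\to\infty$, we have $x_{k,3}>\max\{1,M\}$ for all large $k$, so indeed $f^k(x)\in\Omega$ eventually.

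I do not expect any serious obstacles. The only subtlety is confirming that~\eqref{3g1} is valid at the shifted parameter $E^k(t)$, which is why one must track the relation between $\tau_{\sigma^k(\underline{s}),1}$ and $\tau_{\underline{s},k+1}$ and exploit $\tau_{\underline{s},k+1}\to t_{\underline{s}}<t$. The conceptual heart of the argument is that the gap $t-t_{\underline{s}}>0$ forces $|s_k|$ to be exponentially smaller (at one iterated-exponential scale) than $x_{k,3}\approx E^k(t)$, and the slowly growing envelope $\psi$ is engineered precisely to absorb this discrepancy via~\eqref{5b}.
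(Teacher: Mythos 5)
Your proof is correct and follows the paper's argument almost exactly: write $x=g_{\underline{s}}(t)$ with $t>t_{\underline{s}}$, invoke \eqref{3i} and \eqref{3g1} to get the localization estimate \eqref{5c}, and then use \eqref{3k} together with \eqref{5b} to conclude $|s_k|/\psi(E^k(t))\to 0$. You supply a bit more detail than the paper does (verifying that \eqref{3g1} applies at the shifted parameter via $E^k(\tau_{\underline{s},k+1})=\tau_{\sigma^k(\underline{s}),1}$, and the $\psi(s)/\psi(s-c_9)\to 1$ step in the final comparison), but these are precisely the routine checks the paper leaves implicit.
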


\begin{proof} Let $x=g_{\underline{s}}(t)$ where
$t>t_{\underline{s}}$. Since
$f^k(x)=g_{\sigma^k(\underline{s})}(E^k(t))$ by~\eqref{3i} we have
\begin{equation}\label{5c}
|f^k(x)-(2s_{k,1}, 2s_{k,2},E^k(t))|\leq c_9
\end{equation}
by~\eqref{3g1}. Let now $t'\in (t_{\underline{s}},t)$. Then
\[
\lim_{k\to\infty}\frac{|s_k|}{E^k(t')}=0
\] 
by~\eqref{3k}.
Combining this with~\eqref{5b} yields
\begin{equation}\label{5d}\lim_{k\to\infty}
\frac{|s_k|}{\psi\left(E^k(t)\right)}=0.
\end{equation}
The conclusion now follows from~\eqref{5c} and
\eqref{5d}.
\end{proof}

We define
\[
J':=\left\{x\in J\backslash C:\;f^k(x)\in \Omega\ \text{for all}\ k\geq
0\right\}.
\]
We shall show that
\begin{equation}\label{5e}\dim J'=1.
\end{equation}
Since
\[
J\backslash C=\bigcup_{k\geq 0} f^{-k}(J')
\]
by Lemma~\ref{lemma6}, since $f$ is locally bilipschitz, and since
bilipschitz maps preserve Hausdorff dimension, it follows
from~\eqref{5e} that
$\dim(J\backslash C)=1$
as claimed.

It remains to prove~\eqref{5e}. In order to do so, we apply
Lemma~\ref{lemma5} to a bounded subset $K$ of $J'$, say
$K=B(0,R)\cap J'$ where $R>0$. Fix $\rho$ and $\delta$ and let
$x= g_{\underline{s}}(t)\in J'$.
We want to show that there exist $r(x)$, $d(x)$ and $N(x)$ with the
properties stated in the lemma.

For $k\in\NN$ we put
\[
B_k:=B\left(f^k(x),\tfrac{1}{4}E^k(t)\right)
\] 
and note that~\eqref{5c} yields
\begin{eqnarray}\label{5JBk}
  J'\cap B_k & \subset & \Omega \cap B_k \nonumber \\
   & \subset &
   \left\{(y_1,y_2,y_3)\in \RR^3:\; 
   \tfrac{1}{2}E^k(t)<y_3<\tfrac{3}{2}E^k(t),
     y_1^2+y_2^2<\psi(y_3)^2\right\} \nonumber \\
   & \subset &
  \left[-\psi\left(\tfrac{3}{2}E^k(t)\right),\psi\left(\tfrac{3}{2}E^k(t)\right)\right]^2\times \left[\tfrac{1}{2}E^k(t),\tfrac{3}{2}E^k(t)\right] \nonumber
\end{eqnarray}
for large~$k$. This implies that $J'\cap B_k$ can be covered by
$N_k$ cubes of side length~$1$, where
\begin{equation}\label{5f}N_k\leq 5\psi \left(\tfrac{3}{2}E^k(t)\right)^2E^k(t)
\end{equation}
for large~$k$. These cubes may be assumed to lie in $H_{\geq
\frac{1}{2}E^k(t)}$. By~\eqref{2h} the preimage of such a cube
under $f$ has diameter at most~$d_k$, where
\begin{equation}\label{5g}
d_k:=\frac{2\sqrt{3}c_4\pi}{E^k(t)}.
\end{equation}
Using~\eqref{2b} we see that the diameters of the preimage of
these cubes under $f^k$ also do not exceed~$d_k$.

Let $B_0$ be the component of $f^{-k}(B_k)$ that contains $x$;
that is,
\[
B_0=(L_0\circ L_1 \circ\ldots\circ L_{k-1})(B_k)
\]
with $L_j=\Lambda^{s_j}$ as in section~\ref{constructionhairs}.
The above reasoning
shows that $J'\cap B_0$ can be covered by~$N_k$ sets of diameter at
most $d_k$, where $N_k$ and $d_k$ satisfy~\eqref{5f} and
\eqref{5g} if $k$ is large.

In order to apply Lemma~\ref{lemma5} we need to cover the intersection of~$J'$
with a ball around~$x$ and
thus we denote by~$r_k$ the radius of the largest ball around~$x$
that is contained in~$B_0$. We have to estimate $r_k$ from below.
In order to do so, let $y\in\partial B_0$ with $|y-x|=r_k$. Let~$\gamma_0$ 
be the straight line segment connecting $x$ and~$y$,
put $\gamma_j:=f^j(\gamma_0)$ for $j=1,2,\ldots,k$ 
and $B_j:=f^j(B_0)$
for $j=1,2,\ldots,k-1$. 
Then
$\gamma_k$ connects $f^k(x)$ to a point on $\partial B_k$ and
hence
\begin{equation}\label{5h}\mbox{length}(\gamma_k)\geq\tfrac{1}{4}E^k(t).
\end{equation}
It follows from~\eqref{5c} and the definition of $\Omega$ that
\begin{equation}\label{5h2}
B_k\subset B\left(0, 2 E^k(t)\right)
\end{equation}
for large~$k$.
Since $\gamma_{k-1}=L_{k-1}(\gamma_k)$, we deduce from~\eqref{2e}, \eqref{5h}
and~\eqref{5h2} that
\begin{equation}\label{5i}\mbox{length}(\gamma_{k-1})
\geq\frac{c_3}{2E^k(t)}\;\mbox{length}(\gamma_k)\geq \frac{c_3}{8}.
\end{equation}
Also,~\eqref{2h} implies that
\[
\diam B_{k-1} = \diam L_{k-1}(B_k) \leq
\frac{2c_4\pi}{E^k(t)}\diam B_k =  c_4\pi,
\]
which together with~\eqref{5c} yields
\[
\gamma_{k-1}\subset \overline{B_{k-1}}\subset 
B\left(0,2 E^{k-1}(t)\right)
\]
if $k$ is sufficiently large. Repeating the argument used to
obtain~\eqref{5i} we find that
\[
\mbox{length}(\gamma_{k-2})\geq
\frac{c_3}{2E^{k-1}(t)}\mbox{length}(\gamma_{k-1}).
\] 
Induction shows that
\[
\mbox{length}(\gamma_{k-j})\geq
\left(\frac{c_3}{2}\right)^{j-1}\frac{1}{\prod^{j-1}_{l=1}E^{k-l}(t)}\frac{c_3}{8},
\]
as long as 
$k-j+1$ 
is large enough to guarantee that
\[
\overline{B_{k-j+1}}
\subset
B\left(0,2 E^{k-j+1}(t)\right).
\] 
We put $\tau:=\frac12 c_3$ and deduce that there exist
a positive constant $\kappa$ such that
\begin{equation}\label{5j}r_k=\mbox{length}(\gamma_0)\geq \kappa\frac{\tau^k}{\prod^{k-1}_{j=1}E^j(t)}.
\end{equation}
For $N_k$, $d_k$ and $r_k$ satisfying~\eqref{5f},
\eqref{5g} and~\eqref{5j} and $\rho>1$ we have
\begin{equation}\label{5k}
\frac{N_k d_k^\rho}{r_k^3}
\leq
\frac{5\psi\left(\frac{3}{2}E^k(t)\right)^2
\left(2\sqrt{3}c_4\pi\right)^\rho}
{E^k(t)^{\rho-1} \kappa^3 \tau^{3k}}
\left(\prod^{k-1}_{j=1} E^j(t)\right)^3.
\end{equation}
Using~\eqref{5a} and the fact that
\[
\lim_{k\to\infty}\frac{\prod^{k-1}_{j=1}
E^j(t)}{E^k(t)^\varepsilon}=0
\]
for each $\varepsilon>0$ it is not
difficult to deduce that the right hand side of~\eqref{5k} tends
to 0 as $k\to\infty$. In particular, we have $N_kd_k^\rho\leq
r^3_k$ for large values of~$k$. Moreover, $d_k\in (0,\delta)$ and
$r_k\in (0,1)$ if $k$ is large. We now take such a value of $k$
and put $N(x):=N_k$, $d(x):=d_k$ and $r(x):=r_k$. Then the hypotheses
of Lemma~\ref{lemma5} are satisfied.

We conclude that $\dim(J'\cap B(0,R))\leq \rho$. With
$\rho\to 1$ and $R\to \infty$ we obtain~\eqref{5e}.
This completes the proof of Theorem~\ref{thm1}.

\section{Proof of Theorem~\ref{thm2}}\label{proofthm2}
Let $x\in J$ 
and put $x_k=(x_{k,1},x_{k,2},x_{k,3}):=f^k(x_0)$
for $k\geq 0$.
We shall recursively define a sequence
$\left(y_k\right)_{k\geq 0}$ in $\RR^3\setminus J$
which has the following properties for certain positive
constants~$\eta$ and $\mu$:
\begin{itemize}
\item[(i)] $y_{k,3}=x_{k,3}$,
\item[(ii)] $\left| y_k-x_k\right|\leq 4$,
\item[(iii)] $f(y_k)\in H_{=M}$,
\item[(iv)]
$f(y_{k-1})$ and $y_k$  can be connected by a curve
$\gamma_k\subset\RR^3\setminus\left(J\cup B(0,\eta |x_k|)\right)$
with $\length(\gamma_k)\leq \mu  |x_k|$,
provided $k\geq 1$.
\end{itemize}
It is clear that we can choose $y_0$ satisfying (i), (ii) and (iii).
Suppose now that $k\geq 1$ and that $y_{k-1}$ has
been defined. We put $z_k=(z_{k,1},z_{k,2},z_{k,3}):=f(y_{k-1})$ and
note that $z_{k,3}=M$ by~(iii) and
\[
|x_k|-2a\leq \exp(x_{k-1,3})-a= \exp(y_{k-1,3})-a\leq |z_k|
\leq \exp(x_{k-1,3})+a\leq |x_k|+2a
\]
by~\eqref{2c1}. 
Now there exists $r_k=(r_{k,1},r_{k,2})\in S$
such that
$ \left|(2r_{k,1},2r_{k,2})-(z_{k,1},z_{k,2})\right|\leq 4$.
We put $u_k:=(2r_{k,1}+1,2r_{k,2}+1,M)$ and
$v_k:=(2r_{k,1}+1,2r_{k,2}+1,x_{k,3})$. Then we have
$|v_k|\geq |u_k|\geq \max\{M,|z_k|-6\}\geq \max\{M,|x_k|-6-2a\}$ and
thus for small $\eta>0$ the straight line segments
$[z_k,u_k]$ and $[u_k,v_k]$ do not intersect $ B(0,\eta |x_k|)$.
Moreover, these line segments are contained in $\RR^3\setminus J$.
We also have
\begin{equation}\label{6a}
\length([z_k,u_k])=\left|z_k-u_k\right|\leq 6
\end{equation}
while
\begin{equation}\label{6b}
\length([u_k,v_k])=x_{k,3}-M\leq|x_k|.
\end{equation}
As before we put $\underline{s}=(s_k)_{k\geq 0}:=\underline{s}(x)$ so that
$x_k\in T(s_k)$ for all $k\geq 0$. We define
$w_k:=(2s_{k,1}+1,2s_{k,2}+1,x_{k,3})$.
Then $w_k\notin J$, 
\begin{equation}\label{6c}
|w_k-x_k|\leq 4
\end{equation}
and thus $|w_k|\geq \max\{M,|x_k|-4\}$.
It is not difficult to see 
that $v_k$ and $w_k$ can be connected by a curve
\[
\sigma_k\subset H_{=x_{k,3}}\setminus\left(
J\cup B(0,\min\{|v_k|,|w_k|\})\right)
\]
which satisfies
\[
\length(\sigma_k) \leq  4\left(|v_k|+|w_k|+4\right).
\]
The lower bounds for $|v_k|$ and $|w_k|$ obtained above show that
$\sigma_k$
does not intersect the ball $B(0,\eta |x_k|)$ if $\eta$ is
chosen small enough.
We also have $|w_k|\leq |x_k|+4$ and 
\[
|v_k|\leq |u_k|+x_{k,3}\leq |z_k|+6+|x_k|\leq 2|x_k| +6+2a.
\]
Thus
\begin{equation}\label{6d}
\length(\sigma_k)\leq
4\left(3|x_k|+14+2a\right)=12 |x_k|+56+8a.
\end{equation}
Finally we connect $w_k$ to $x_k$ by a straight line
and denote by $y_k$ the point of the intersection of this
line with $\partial T(s_k)$ which is closest to~$w_k$.
Then $y_k$ satisfies (i), (ii) and (iii).
Clearly, the segment $[w_k,y_k]$ does not intersect $J$ and for small 
$\eta$ it also does not intersect $ B(0,\eta |x_k|)$.
The curve
\[
\gamma_k:=[z_k,u_k]\cup [u_k,v_k] \cup \sigma_k\cup [w_k,y_k]
\]
then connects $z_k=f(y_{k-1})$ with $y_k$ and \eqref{6a},
 \eqref{6b},  \eqref{6c}  and  \eqref{6d} yield
\[
\length(\gamma_k)\leq
13 |x_k|+66+8a.
\]
Since $x_k\geq M$ we deduce that
$\length(\gamma_k)\leq\mu |x_k|$  for some $\mu>0$.
Moreover, it follows from the definition of $\gamma_k$ that
$\gamma_k\in\RR^3\setminus\left(J\cup B(0,\eta |x_k|)\right)$
for some $\eta>0$. Thus (iv) holds.

Let now $(y_k)$ and $(\gamma_k)$ be the sequences constructed
as above. We put
\[
\Gamma_k:=(L_0\circ L_1\circ\dots\circ L_{k-1})(\gamma_k)
\]
where $L_0,L_1,\dots,L_{k-1}$ are as in section~\ref{constructionhairs}.
From~\eqref{2f} we can deduce that
\begin{equation}\label{6e}
\length(L_{k-1}(\gamma_k))\leq c_4\frac{\length(\gamma_k)}{\eta |x_k|}
\leq  \frac{c_4\mu}{\eta}.
\end{equation}
Combining~\eqref{6c} and~\eqref{6e} with~\eqref{2b} yields
\[
\dist(x,\Gamma_k)\leq 4 \alpha^k
\quad \text{and} \quad
\length(\Gamma_k)\leq  \frac{c_4\mu}{\eta}\alpha^{k-1}.
\]
It follows that 
\[
\Gamma:=\bigcup_{k=1}^\infty \Gamma_k \cup \{x\}
\]
is a rectifiable curve with endpoints $y_0$ and $x$ which except
for the point $x$ is contained in~$\RR^3\setminus J$. Thus
 $x$ is accessible from  $\RR^3\setminus J$.

\section{Examples of Zorich maps}\label{examples}
We consider Zorich maps $F(x_1,x_2,x_3)=e^{x_3}h(x_1,x_2)$ for
which there exists an annulus
\[
A:=\left\{(x_1,x_2)\in\RR^2:\; (s-\delta)^2< x_1^2+x_2^2<(s+\delta)^2\right\},
\quad 0<\delta<s<\tfrac14,
\] 
such
that if $(x_1,x_2)=(r\cos \varphi, r \sin \varphi)\in A$, then
\[
h(r\cos \varphi, r\sin \varphi)=\left(R(r)\cos \Phi
(\varphi),R(r)\sin \Phi(\varphi),\sqrt{1-R(r)^2}\right),
\]
with certain increasing and continuously differentiable functions
$R:(s-\delta, s+\delta)\to (0,1)$ and $\Phi:\RR\to\RR$ such that
$\Phi(\varphi+2\pi)= \Phi(\varphi)+2\pi$ for $\varphi\in \RR$. 
We put $t:=R(s)$, $w:=\log(s/t)$ and $a:=s\sqrt{1-t^2}/t-w$.

Then
\[
f(x_1,x_2,x_3)=F(x_1,x_2,x_3)-(0,0,a)=e^{x_3}h(x_1,x_2)-(0,0,a)
\]
maps the circle
\[
C(s,w):=\left\{(x_1,x_2,x_3)\in \RR^3:\; x_1^2+x_2^2=s^2,\;x_3=w\right\}
\]
into itself. Now
\[
\frac{\partial f}{\partial r}(r\cos \varphi, r\sin
\varphi,x_3)=e^{x_3}\left(R'(r)\cos \Phi (\varphi),R'(r)\sin \Phi
(\varphi),\frac{-R'(r)}{\sqrt{1-R(r)^2}}\right)
\] 
and hence
\begin{equation}\label{7a}
\left|\frac{\partial f}{\partial r}(s\cos \varphi,s \sin \varphi,
w)\right|=e^wR'(s)\sqrt{1+\frac{1}{1-t^2}}=\frac{s}{t}R'(s)\sqrt{\frac{2-t^2}{1-t^2}}.
\end{equation}
Also,
\[
\frac{\partial f}{\partial x_3}(r\cos \varphi, r\sin \varphi,
x_3)=e^{x_3}h(r\cos \varphi, r\sin \varphi)
\]
so that
\begin{equation}\label{7b}
\left|\frac{\partial f}{\partial x_3}(s\cos \varphi,s \sin
\varphi, w)\right|=e^w=\frac{s}{t}.
\end{equation}
We can choose~$s$ and $R$ such that $t=R(s)>4s$ and
\[
R'(s)<\frac{t}{4s}\sqrt{\frac{1-t^2}{2-t^2}}.
\]
Then the right hand sides of both \eqref{7a} and \eqref{7b} are
strictly less than $\frac{1}{4}$. 
This implies that there exists
$\varepsilon>0$ such that if $D:=\{x\in \RR^3:\dist(x,C(s,w))<\varepsilon\}$, 
then
\[
\left|\frac{\partial f}{\partial r}(x)\right|\leq
\frac{1}{4}\quad \text{and}\quad \left|\frac{\partial f}{\partial
x_3}(x)\right|\leq \frac{1}{4}
\] 
for $x\in D$.
It follows that if $x\in D$ and $y\in C(s,w)$ such that 
$|x-y|=\dist(x,C(s,w))$, then $|f(x)-f(y)|\leq \frac{1}{2}|x-y|$.
Hence
\[
\dist(f^k(x),C(s,w))\to 0
\]
as $k\to \infty$
for all $x\in D$.

The simplest choice for the function 
$\Phi$ is $\Phi(\varphi)=\varphi$. Then
$C(s,w)$ is a continuum which consists of fixed points of $f$
and attracts all points from a neighborhood of~$C(s,w)$.

We may also choose a function $\Phi$ which satisfies
\[
\Phi(\varphi)=\varphi+\varphi^3\sin\left(\frac{\pi}{\varphi}\right)
\quad\text{for}\ |\varphi|\leq \frac{1}{5}
\]
since then
\[
\Phi'(\varphi)=1+3\varphi^2
\sin\left(\frac{\pi}{\varphi}\right)-
\varphi\pi\cos\left(\frac{\pi}{\varphi}\right)\geq
1-\frac{3}{25}-\frac{\pi}{5}>0
\quad\text{for}\ |\varphi|\leq \frac{1}{5}.  
\] 
With $\varphi_n:=1/n$ the points
$u_n:=(s\cos \varphi_n, s\sin \varphi_n, w)$
are fixed points of $f$ for $n\geq 5$,
and since
\[
\Phi'(\varphi_n)=1-(-1)^n\frac{\pi}{n}
\quad\text{for}\ n\geq 5
\]
we see that $u_n$ is an attracting fixed point of $f$ if $n\geq 5$
is even; that is, there exists a neighborhood $U_n$ of $u_n$ such
that $f^k(x)\to u_n$ as $k\to \infty$ for all $x\in U_n$. Thus
$f$ has infinitely many attracting fixed points. If $n\geq 5$ is
odd, then $\varphi_n$ is a repelling fixed point of $\Phi$ and
thus $u_n$ is a saddle point of~$f$.

Quite generally, we can take a circle diffeomorphism
$\Psi$ and choose $\Phi$ such that the restriction of $f$ to
$C(s,w)$ is conjugate to~$\Psi$.

We thus see that the dynamics of Zorich maps can be much more complicated 
than those of exponential maps.
Presumably Zorich maps can also have ``strange attractors''.

\end{document}